\newtheorem{theorem}{Theorem}[section]
\newtheorem{lemma}{Lemma}[section]
\newtheorem{prop}{Proposition}[section]
\numberwithin{equation}{section}
\newcommand{\ba}{\begin{array}}
\newcommand{\ea}{\end{array}}
\newcommand{\rank}{{\rm rank}}
\newcommand{\End}{{\rm End}}
\newcommand{\Id}{\textrm{Id}}
\newcommand{\Hom}{{\rm Hom}}
\def \qed{\cqfd}
\def\qed{\vbox{\hrule
\hbox{\vrule\hbox to 5pt{\vbox to 8pt{\vfil}\hfil}\vrule}\hrule}}
\newcommand{\beg}{\begin{eqnarray*}}
\newcommand{\begn}{\begin{eqnarray}}
\newcommand{\en}{\end{eqnarray*}}
\newcommand{\enn}{\end{eqnarray}}
\newcommand{\tr}{\mbox{\rm tr\,}}
\begin{document}
\title{The limit of the harmonic flow on flat complex vector bundle}
\subjclass[]{53C07, 58E15}
\author{Xi Zhang}
\address{Xi Zhang\\School of Mathematical Sciences\\
University of Science and Technology of China\\
Hefei, 230026,P.R. China\\ } \email{mathzx@ustc.edu.cn}
\thanks{The authors are partially supported by NSF in China No.11625106,
11801535 and 11721101. The research was partially supported by the project ``Analysis and Geometry on Bundle" of Ministry of Science and Technology of the People's Republic of China, No.SQ2020YFA070080.}
\subjclass[]{53C07, 58E15}
\keywords{Projectively flat bundle, \ Higgs bundle, \ non-K\"ahler, \ the Hermitian-Yang-Mills flow, \ $\epsilon$-regularity theorem. }

\begin{abstract}
In this paper we study the limiting behaviour of the harmonic flow on flat complex vector bundle, and prove the limit must be isomorphic to the graded flat complex vector bundle associated to the Jordan-H\"older filtration.
\end{abstract}

\maketitle


\section{Introduction}

Let $(E, D)$ be a flat complex vector bundle of rank $r$ over a compact Riemannian manifold $(M, g)$. We say $(E,D)$ is simple if it has no proper $D$-invariant sub-bundle and  $(E,D)$ is semi-simple if it is a direct sum of $D$-invariant sub-bundles. For the general case, there is a filtration of sub-bundles
\begin{equation}\label{HNS01}
0=E_{0}\subset E_{1}\subset \cdots \subset E_{i} \cdots \subset E_{l}=E ,
\end{equation}
such that every sub-bundle $E_{i}$ is $D$-invariant and every quotient bundle $(Q_{i}, D_{i}):=(E_{i}/E_{i-1}, D_{i})$ is flat and simple, which is called the Jordan-H\"older filtration of the flat complex vector bundle $(E, D)$.
It is well known that the above filtration may be not unique, but the following graded flat complex vector bundle
\begin{equation}Gr^{JH}(E, D
)=\oplus_{i=1}^{l}(Q_{i}, D_{i})
\end{equation} is unique in the sense of isomorphism. By the Riemann-Hilbert correspondence, we know that there is a one-to-one correspondence between the moduli space of fundamental group representations and the moduli space of flat vector bundles. If the flat bundle $(E, D)$ is corresponding to a representation $\tau : \pi_{1}(M) \rightarrow GL(r, \mathbb{C})$, the graded object $Gr^{JH}(E, D)$ is corresponding to the semi-simplification of $\tau $.

Given a Hermitian metric $H$ on $E$, there is a unique decomposition
\begin{equation}
D=D_{H}+\psi_{H},
\end{equation}
where $D_{H}$ is a unitary connection and $\psi_{H}\in \Omega^{1}(\mbox{End}(E))$ is self-adjoint with respect to $H$. A Hermitian metric $H$ is called harmonic on $(E, D)$ if it is a critical point of the energy functional $\int_{M}|\psi_{H}|^{2}dV_{g}$, i.e. it satisfies the Euler-Lagrange equation
  \begin{equation}
D_{H}^{\ast } \psi_{H}=0.
\end{equation}
Under the assumption that the flat complex vector bundle $(E, D)$ is semi-simple,  Corlette (\cite{Cor}) and Donaldson (\cite{Don3}) proved the existence of harmonic metric. Furthermore, when $(M, g)$ is a K\"ahler manifold, the existence of harmonic metric $H$ implies that there exists a poly-stable Higgs structure $ (D_{H}^{0, 1}, \psi_{H}^{1, 0} )$ on $E$. On the other hand, by the work of  Hitchin (\cite{H}) and Simpson (\cite{S2}) on Donaldson-Uhlenbeck-Yau theorem for Higgs bundles, one has the non-abelian Hodge correspondence, i.e. there is  an
equivalence of categories between the category of  poly-stable Higgs bundles with vanishing Chern numbers and the category of semi-simple flat bundles.

In order to obtain harmonic metrics, Corlette (\cite{Cor}) introduced the following heat flow
\begin{equation}\label{H1}
\frac{\partial \sigma (t)}{\partial t} \cdot (\sigma (t))^{-1}= D_{t, K}^{\ast }\psi_{t, K},
\end{equation}
where $K$ is a fixed Hermitian metric on $(E, D)$, $\sigma (t)\in \Gamma (\mbox{Aut}E)$ and
\begin{equation}D_{t}=\sigma (t) \{D\}=\sigma (t) \cdot D \cdot \sigma^{-1} (t)=D_{t, K} + \psi_{t, K}.\end{equation}
The heat flow (\ref{H1}) is equivalent to the following heat flow which involves flat connections,
\begin{equation}\label{H2}
\frac{\partial D_{t}}{\partial t} =-D_{t}\{D_{t, K}^{\ast }\psi_{t, K}\}.
\end{equation}
We call the above heat flow (\ref{H2}) the harmonic flow on the flat bundle $(E, D)$. Corlette proved the existence of long time solution $D_{t}$ ($0\leq t < \infty $) for the heat flow (\ref{H2}). By choosing a subsequence and taking suitable unitary gauge transformations, $D_{t_{i}}$ converges weakly to a flat connection $D_{\infty }$ in $L_{1}^{ p}$. Furthermore, if $(E, D)$ is simple, Corlette showed that the limit must lie in the complex gauge orbit of $D$, i.e. there exists $\eta_{\infty }\in \Gamma (\mbox{Aut}E)$ such that $D_{\infty } =\eta_{\infty } \cdot D \cdot \eta^{-1}_{\infty}$.

In this paper, we consider the limit of the harmonic flow (\ref{H2}) on the flat bundle $(E, D)$ which is not necessarily simple. Firstly, let's recall the limiting behaviour of the Yang-Mills flow on holomorphic vector bundles. For the Riemann surface case, Atiyah and Bott (\cite{AB}) pointed out that the limiting holomorphic bundle should be isomorphic to
the graded bundle associated to the Harder-Narasimhan-Seshadri filtration, and this conjecture has been proved by  Daskalopoulos (\cite{Da}). In \cite{BS}, Bando and Siu proposed an interesting question that the above Atiyah-Bott's conjecture should still hold for reflexive sheaf $\mathcal{E}$ over higher dimensional K\"ahler manifold.  When the sheaf $\mathcal{E}$ is locally free, this question was answered in the affirmative by Daskalopoulos and Wentworth (\cite{DW1}) for  K\"ahler surfaces case;  by Jacob (\cite{Ja3}) and Sibley (\cite{Sib})  for  higher dimensional case. The general reflexive sheaves case was  confirmed by Li, Zhang and the author (\cite{LZZ}). Inspired by this, it is natural to raise a question: should the limit of the harmonic flow (\ref{H2}) be isomorphic to the graded  flat complex vector bundle associated to the Jordan-H\"older filtration? When the base manifold $(M, g)$ is K\"ahler, this also is conjectured by Deng (\cite{De}) in his doctoral dissertation. In this paper, we solve this problem, i.e. we prove the following theorem.

\medskip

\begin{theorem}\label{thm0}
Let $(E, D)$ be a flat complex vector bundle over a compact Riemannian manifold $(M, g)$, and $D_{t}$ be the long time solution of the harmonic flow (\ref{H2}) with initial data $D$. Then the limiting flat bundle $(E, D_{\infty})$ must be isomorphic to the graded  flat complex vector bundle associated to the Jordan-H\"older filtration of $(E, D)$, i.e. we have:
\begin{equation}\label{I0}
(E, D_{\infty })\cong Gr^{JH}(E, D).
\end{equation}
\end{theorem}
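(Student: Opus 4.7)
The plan is to adapt the strategy developed for the Atiyah-Bott conjecture in the Yang-Mills / Higgs setting (Daskalopoulos \cite{Da}, Daskalopoulos-Wentworth \cite{DW1}, Jacob \cite{Ja3}, Sibley \cite{Sib}, Li-Zhang-Zhang \cite{LZZ}) to the harmonic flow on a flat bundle. The three ingredients are: (i) produce a $D_{\infty}$-invariant filtration of $(E, D_{\infty})$ by transporting the Jordan-H\"older filtration $\{E_i\}$ of (\ref{HNS01}) along the flow and taking a weak limit of the corresponding $K$-orthogonal projections; (ii) identify each successive quotient with $(Q_i, D_i)$ via the simple case of Corlette's theorem \cite{Cor}; (iii) split the resulting filtration using semi-simplicity of $(E, D_{\infty})$.

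For (i), since each $E_i$ is $D$-invariant, $E_i(t) := \sigma(t)(E_i)$ is $D_t$-invariant; let $\pi_i(t) \in \Gamma(\mbox{End}(E))$ be the $K$-orthogonal projection onto $E_i(t)$. The standard energy identity
\begin{equation*}
\frac{d}{dt}\int_M |\psi_{t,K}|^2 \, dV_g \; = \; -2 \int_M |D_{t,K}^{\ast}\psi_{t,K}|^2 \, dV_g
\end{equation*}
yields $\int_0^{\infty}\!\!\int_M |D_{t,K}^{\ast}\psi_{t,K}|^2\, dV_g\, dt < \infty$, and a Bochner-type identity relating $D\pi_i(t)$ to the second fundamental form of $E_i(t)$ inside $(E, D_t)$ gives uniform $L_1^2$ control on the projections. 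Along the subsequence $t_j \to \infty$ for which (after unitary gauges $u_{t_j}$) $D_{t_j}$ converges weakly in $L_1^p$ to $D_{\infty}$, the transported projections $u_{t_j}\pi_i(t_j) u_{t_j}^{-1}$ sub-converge weakly in $L_1^2$ to limits $\pi_i^{\infty}$. An $\epsilon$-regularity theorem for the harmonic flow (this is the paper's listed keyword), coupled with a Bando-Siu-type removable-singularity argument (cf. \cite{BS}, \cite{LZZ}), upgrades each $\pi_i^{\infty}$ to a smooth projection onto a $D_{\infty}$-invariant sub-bundle $F_i^{\infty} \subset E$, producing $0 = F_0^{\infty} \subset F_1^{\infty} \subset \cdots \subset F_l^{\infty} = E$.

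For (ii), the harmonic flow descends to a flow on each JH quotient $(Q_i, D_i)$, which is simple, so Corlette's simple-case result forces the induced limit to lie in the complex gauge orbit of $(Q_i, D_i)$; direct comparison with the quotient filtration from (i) gives $F_i^{\infty}/F_{i-1}^{\infty} \cong (Q_i, D_i)$ as flat bundles. For (iii), the energy decay passes to the limit and yields $D_{\infty, K}^{\ast}\psi_{\infty, K} = 0$ weakly, and hence (by $\epsilon$-regularity) smoothly on a large open set, so $K$ is a harmonic metric on $(E, D_{\infty})$; the converse direction of Corlette's existence theorem then forces $(E, D_{\infty})$ to be semi-simple. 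Semi-simplicity splits the filtration $\{F_i^{\infty}\}$, and combined with (ii) one obtains
\begin{equation*}
(E, D_{\infty}) \; \cong \; \oplus_{i=1}^{l}\bigl(F_i^{\infty}/F_{i-1}^{\infty}\bigr) \; \cong \; \oplus_{i=1}^{l}(Q_i, D_i) \; = \; Gr^{JH}(E, D).
\end{equation*}

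The principal obstacle is step (i): showing that $\pi_i^{\infty}$ is a \emph{smooth} projection of the \emph{correct rank}, rather than a degenerate weak limit whose image drops rank somewhere. This is precisely what an $\epsilon$-regularity theorem for the harmonic flow is designed to rule out, by preventing energy concentration of $\psi_{t,K}$ or of $|D\pi_i(t)|^2$ on subsets of small codimension; once such a theorem is available, Bando-Siu-type removable-singularity technology adapted to the flat-bundle setting extends the smooth projection across the exceptional set and delivers a genuine $D_{\infty}$-invariant sub-bundle. The flat-bundle analogue of Sibley's second-fundamental-form monotonicity is expected to supply the rank-preserving estimate and is where the bulk of the analytic work will lie.
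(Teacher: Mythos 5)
Your overall shape (transport the filtration, control the limiting projections, identify the quotients, split) matches the paper's, but two of your three steps have genuine gaps, and the analytic machinery you invoke for the third is not what is needed here. First, step (ii) is wrong as stated: the harmonic flow on $(E,D)$ does \emph{not} descend to the harmonic flow on a quotient $(Q_i,D_i)$ --- by the Gauss--Codazzi computation (\ref{codazzi1}) the induced quantity on the quotient is $D_{K_Q}^{\ast}\psi_{D_Q,K_Q}+\frac{1}{2}g^{ij}\beta_i^{\ast}\circ\beta_j$, so the induced family of connections on $Q$ is not a solution of (\ref{H1}) and Corlette's simple-case result cannot be applied to it. This is precisely why the paper proves Theorem \ref{a1} for an \emph{arbitrary} sequence of gauge transformations $\sigma_j$ subject only to weak $L_1^p$ convergence, uniform $L^{\infty}$ bounds on $\psi_{D_j,K}$ and $D_{j,K}^{\ast}\psi_{D_j,K}$, and harmonicity of the limit, and then runs an induction on rank: one peels off only the first (simple) piece $E_1$, verifies that the induced sequence on $Q=E/E_1$ again satisfies these hypotheses (using (\ref{Q9}), (\ref{codazzi1}) and (\ref{pi4})), and invokes the induction hypothesis. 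Relatedly, your identification $F_i^{\infty}/F_{i-1}^{\infty}\cong(Q_i,D_i)$ by ``direct comparison'' has no mechanism behind it; the paper supplies one in Proposition \ref{t1}: the rescaled inclusions $\sigma_j\circ i_0/\|\sigma_j\circ i_0\|_{L^2}$ subconverge to a \emph{nonzero} map intertwining $D_S$ with $D_{\infty}$, which by simplicity of $E_1$ is an isomorphism onto the limiting sub-bundle. That proposition is the heart of the identification and is absent from your proposal.

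Second, the obstacle you correctly single out in step (i) --- that the weak limit of the projections might drop rank or be singular --- does not require an $\epsilon$-regularity theorem or a Bando--Siu removable-singularity argument in this setting, and none is proved or used in the paper. The maximum principle gives a \emph{global} uniform $L^{\infty}$ bound on $\psi_{\sigma(t)\{D\},K}$ (estimate (\ref{C15})) and on $(\sigma(t)\{D\})_K^{\ast}\psi_{\sigma(t)\{D\},K}$ (via (\ref{C13})), so there is no energy concentration to rule out. The rank and smoothness of the limit projection then come essentially for free: $\pi_1^j$ satisfies $(\pi_1^j)^2=\pi_1^j=(\pi_1^j)^{\ast K}$ and $|\pi_1^j|_K^2\equiv\rank(E_1)$ pointwise, so any strong $L^p$ limit is again a projection of the same rank almost everywhere, while the Chern--Weil-type identity (\ref{pi4}),
\begin{equation*}
\int_M|D_j\pi_1^j|_K^2\,dV_g=-2\int_M\langle\pi_1^{H_j}\circ D_{H_j}^{\ast}\psi_{D,H_j}\circ i_0,\Id_S\rangle_{H_j}\,dV_g\leq 2\int_M|D_j^{\ast}\psi_{D_j,K}|_K\,dV_g\rightarrow 0
\end{equation*}
(valid because $\int_M\tr(D_{S,H_j}^{\ast}\psi_{D_S,H_j})\,dV_g=0$), forces $D_{\infty}\pi_1^{\infty}=0$, whence $\pi_1^{\infty}$ is parallel, smooth by elliptic regularity, and already gives the splitting $(E,D_{\infty})\cong(E_1^{\infty},D_{1,\infty})\oplus(Q_{\infty},D_{Q_{\infty}})$ without any separate appeal to semi-simplicity. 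Your proposed route through a ``flat-bundle analogue of Sibley's monotonicity'' would be importing the hard part of the Yang--Mills story into a problem where the elementary identity above does all the work.
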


\medskip

\medskip

This paper is organized as follows. In Section 2, we introduce some basic concepts and results about the harmonic flow on flat complex vector bundles. In section 3, we  give a proof of Theorem \ref{thm0}.
\medskip


\medskip

\section{Preliminaries}

Let $(M, g )$ be a compact Riemannian manifold of dimension $n$, $E$ be a   complex vector bundle over $M$ with  rank $r$.
Given any connection $D$ and Hermitian metric $H$ on $E$, there is a unique decomposition
\begin{equation}
D=D_{H}+\psi_{H},
\end{equation}
where $D_{H}$ is an $H$-unitary connection, $\psi_{H}\in \Omega^{1}(\mbox{End}(E))$ is $H$-self-adjoint, i.e. $\psi_{H}^{\ast H}=\psi_{H}$, and
\begin{equation}\label{ex1}
H(\psi_{H}X, Y)=\frac{1}{2}\{H(DX, Y)+H(X, DY)-dH(X, Y)\}
\end{equation}
for any $X, Y\in \Gamma (E)$. Suppose $K$ is another Hermitian metric on $E$, then we have
\begin{equation}\label{L1}
\begin{split}
\psi_{H}&=\frac{1}{2}h^{-1}\circ \psi_{K} \circ h + \frac{1}{2} \psi_{K} +\frac{1}{2} (D_{K}- h^{-1}\circ D_{K} \circ h)\\
&=h^{-1}\circ \psi_{K} \circ h + \frac{1}{2} (D- h^{-1}\circ D \circ h)\\
\end{split}
\end{equation}
and
\begin{equation}\label{L2}
\begin{split}
D_{H} &= \frac{1}{2} (\psi_{K} - h^{-1}\circ \psi_{K} \circ h +D_{K} + h^{-1}\circ D_{K} \circ h)\\
&=\psi_{H}- h^{-1}\circ \psi_{K} \circ h + h^{-1}\circ D_{K} \circ h \\
&=h^{-1}\circ D_{K} \circ h + \frac{1}{2} (D- h^{-1}\circ D \circ h).\\
\end{split}
\end{equation}
where $h=K^{-1}H$.

If $D$ is a flat connection, then
\begin{equation}\label{condition3}
0=F_{D}=D_{H}^{2}+\psi_{H}\wedge \psi_{H}+D_{H}\circ \psi_{H}+ \psi_{H}\circ D_{H}.
\end{equation}
Considering the self-adjoint and anti-self-adjoint parts of the above identity,  we obtain
\begin{equation}\label{co1}
D_{H}(\psi_{H})=0,
\end{equation}
and
\begin{equation}\label{D1}
D_{H}^{2}+\psi_{H}\wedge \psi_{H}=0.
\end{equation}

Let $H(t)$ be a family of Hermitian metrics on $E$. By direct computation, one can find that
\begin{equation}
\frac{\partial \psi_{H(t)}}{\partial t}=-\frac{1}{2} D_{H}(H^{-1}\frac{\partial H}{\partial t})+\frac{1}{2} \psi_{H}\circ H^{-1}\frac{\partial H}{\partial t}- \frac{1}{2} H^{-1}\frac{\partial H}{\partial t}\circ \psi_{H}.
\end{equation}
Choosing local coordinates $\{x^{i}\}_{i=1}^{n}$ on $M$, we  write $g=g_{ij}dx^{i}\otimes dx^{j}$, $\psi_{H}=(\psi_{H})_{k}dx^{k}$ and
\begin{equation}
|\psi_{H}|_{H}^{2}=g^{ij}\tr \{(\psi_{H})_{i}\circ (\psi_{H})_{j}^{\ast H}\},
\end{equation}
where $(\psi_{H})_{i} \in \Gamma (\End(E))$ and $(g^{ij})$ is the inverse matrix of $(g_{ij})$. After a straightforward calculation, one can check that
\begin{equation}
\begin{split}
\frac{\partial }{\partial t} |\psi_{H(t)}|_{H}^{2} &=2Re\langle\frac{\partial \psi_{H(t)}}{\partial t}-\frac{1}{2} \psi_{H}\circ H^{-1}\frac{\partial H}{\partial t}+\frac{1}{2} H^{-1}\frac{\partial H}{\partial t}\circ \psi_{H} , \psi_{H}\rangle_{H}\\
&=-Re\langle D_{H}( h^{-1}\frac{\partial h}{\partial t}) , \psi_{H}\rangle_{H}.\\
\end{split}
\end{equation}


Let $K$ be a fixed metric on $E$. Denote the group of smooth automorphisms of $E$ (which preserve the  metric $K$) as $\mathcal{G}$ ($\mathcal{U}_{K}$). Every $\sigma \in \mathcal{G}$ acts on the connection $D$ by
\begin{equation}
\sigma (D):= \sigma \circ D \circ \sigma^{-1} .
\end{equation}
For  $\sigma \in \mathcal{G}$, i.e. $H(X, Y)=K(\sigma X, \sigma Y)$ for any $X, Y \in \Gamma(E)$, set $H =K \sigma^{\ast K} \sigma $. One can see that
\begin{equation}
\begin{split}
K(\psi_{\sigma (D) , K} (X), Y)&= \frac{1}{2}\{K(\sigma (D)X, Y)+K(X, \sigma (D)Y)-dK(X, Y)\}\\
&= \frac{1}{2}\{H(D\circ \sigma^{-1} X, \sigma^{-1}Y)+H(\sigma^{-1}X, D\circ \sigma^{-1}Y)-dH(\sigma^{-1}X, \sigma^{-1}Y)\}\\
&= H(\psi_{D , H} \circ \sigma^{-1} (X), \sigma^{-1}Y)\\
&= K(\sigma \circ \psi_{D , H} \circ \sigma^{-1} (X), \sigma^{-1}Y).\\
\end{split}
\end{equation}
Then
\begin{equation}\label{L3}
\psi_{\sigma (D) , K} = \sigma \circ \psi_{D , H} \circ \sigma^{-1}
\end{equation}
and
\begin{equation}\label{L4}
\sigma (D)_{K} = \sigma \circ D_{H} \circ \sigma^{-1} .
\end{equation}
By (\ref{L1}), (\ref{L2}), (\ref{L3}) and (\ref{L4}), we have (or By the definition, we have!!!!!!)
\begin{equation}\label{L5}
\psi_{\sigma (D) , K} = (\sigma^{\ast K})^{-1} \circ \psi_{D , K} \circ \sigma^{\ast K}+\frac{1}{2} \sigma \circ D \circ \sigma^{-1}-\frac{1}{2} (\sigma^{\ast K})^{-1} \circ D \circ \sigma^{\ast K}
\end{equation}
and
\begin{equation}\label{L6}
\sigma (D)_{K} = (\sigma^{\ast K})^{-1} \circ D_{K} \circ \sigma^{\ast K}+\frac{1}{2} \sigma \circ D \circ \sigma^{-1}-\frac{1}{2} (\sigma^{\ast K})^{-1} \circ D \circ \sigma^{\ast K}.
\end{equation}
Let $h=\sigma^{\ast K}\circ \sigma $. There holds that
\begin{equation}\label{L7}
\psi_{\sigma (D) , K} = (\sigma^{\ast K})^{-1} \circ (\psi_{D , K}-\frac{1}{2}D(h)\circ h^{-1} ) \circ \sigma^{\ast K}
\end{equation}
and
\begin{equation}\label{L8}
\sigma (D)_{K} = (\sigma^{\ast K})^{-1} \circ (D_{K}-\frac{1}{2}D(h)\circ h^{-1} )  \circ \sigma^{\ast K}.
\end{equation}
From the definition, it  is easy to see that
\begin{equation}
\langle\varphi_{1}, \varphi_{2}\rangle_{H}=\langle\sigma \circ \varphi_{1}\circ \sigma^{-1}, \sigma \circ \varphi_{2}\circ \sigma^{-1}\rangle_{K}
\end{equation}
for any $\varphi_{1}, \varphi_{2} \in \Gamma(\End(E))$. We know
\begin{equation}
\begin{split}
\langle\psi_{D, H}, D_{H}\varphi \rangle_{H}&=\langle\psi_{D, H}, (\sigma^{-1} \circ (\sigma (D))_{K} \circ \sigma )(\varphi ) \rangle_{H}\\
&=\langle\psi_{D, H}, \sigma^{-1} \circ \{(\sigma (D))_{K}(\sigma \circ \varphi \circ \sigma^{-1})\} \circ \sigma  \rangle_{H}\\
&=\langle\sigma \circ \psi_{D, H}\circ \sigma^{-1},  (\sigma (D))_{K}(\sigma \circ \varphi \circ \sigma^{-1})  \rangle_{K}\\
&=\langle \psi_{\sigma(D), K},  (\sigma (D))_{K}(\sigma \circ \varphi \circ \sigma^{-1})  \rangle_{K}\\
\end{split}
\end{equation}
and then
\begin{equation}\label{C120}
(\sigma (D))_{K}^{\ast }\psi_{\sigma(D), K}=\sigma \circ D_{H}^{\ast }\psi_{D, H} \circ \sigma^{-1}.
\end{equation}
On the other hand, one can check that $(\sigma (D))_{K}^{\ast }\psi_{\sigma(D), K}$
is self-adjoint with respect to the metric $K$.

\begin{lemma}\label{C0}
Let $(E, D)$ be a flat complex vector bundle on a compact Riemannian manifold $(M, g)$, and $K$ be a Hermitian metric on $E$. For any $\sigma \in \mathcal{G}$, we have
\begin{equation}\label{C01}
\langle\sigma^{-1}\circ (\sigma (D))_{K}^{\ast }\psi_{\sigma(D), K}\circ \sigma -D_{K}^{\ast }\psi_{D, K}, h\rangle_{K}=\frac{1}{2}\Delta \tr h -\frac{1}{2}\langle D(h) \circ h^{-1}, D(h)\rangle_{K},
\end{equation}
\begin{equation}\label{C012}
\langle D_{K}^{\ast }\psi_{D, K}-\sigma^{-1}\circ (\sigma (D))_{K}^{\ast }\psi_{\sigma(D), K}\circ \sigma , h^{-1}\rangle_{K}=\frac{1}{2}\Delta \tr h^{-1} -\frac{1}{2}\langle h\circ D(h^{-1}), D(h^{-1})\rangle_{K}
\end{equation}
and
\begin{equation}\label{C02}
\langle\sigma^{-1}\circ (\sigma (D))_{K}^{\ast }\psi_{\sigma(D), K}\circ \sigma -D_{K}^{\ast }\psi_{D, K}, s\rangle_{K}=\frac{1}{4}\Delta |s|_{K}^{2} -\frac{1}{2}\langle D(h) \circ h^{-1}, D(s)\rangle_{K},
\end{equation}
where $h=\sigma^{\ast K}\circ \sigma $ and $s=\log h$.
\end{lemma}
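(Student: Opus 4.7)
The three identities (\ref{C01}), (\ref{C012}), (\ref{C02}) are pointwise comparisons of the moment map $D_K^{\ast}\psi_{D,K}$ with its gauge-conjugate $\sigma^{-1}\circ (\sigma(D))_K^{\ast}\psi_{\sigma(D),K}\circ \sigma$, paired respectively with $h$, $h^{-1}$, and $s=\log h$. My plan is to establish (\ref{C01}) first by direct pointwise computation, then to obtain (\ref{C012}) and (\ref{C02}) by parallel arguments. First I would invoke (\ref{C120}) to rewrite the left-hand side of (\ref{C01}) as $\langle D_H^{\ast}\psi_{D,H} - D_K^{\ast}\psi_{D,K},\, h\rangle_K$, where $H = K\sigma^{\ast K}\sigma = Kh$; this reduces the statement to a comparison of the moment maps of the same flat connection $D$ computed with respect to two different Hermitian metrics $K$ and $H$, and shows that the identity depends only on $h$, not on the particular $\sigma$ producing it.

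Next I would use (\ref{L1}) and (\ref{L2}) to expand $\psi_{D,H}$ and $D_H$ in terms of $\psi_{D,K}$, $D_K$, and $h$, and then compute $D_H^{\ast}\psi_{D,H}$ by exploiting the conversion $\alpha^{\ast H} = h^{-1}\alpha^{\ast K}h$ for $\alpha\in \End(E)$ together with $h^{\ast K}=h$. After taking the pointwise pairing with $h$ and using cyclicity of the trace, the resulting terms split into three groups: those recombining into $\langle D_K^{\ast}\psi_{D,K},\, h\rangle_K$; a ``pure trace'' group built from $\tr(D_K h)$, which by the key identity $\tr(D_K h) = d\tr h$ gives $\tfrac{1}{2}\Delta\tr h$ through a Bochner-type divergence computation; and a quadratic correction in $D_K h$, which with the aid of $Dh = D_K h + [\psi_K, h]$ and $h^{\ast K}=h$ reduces exactly to $-\tfrac{1}{2}\langle D(h)\circ h^{-1},\, D(h)\rangle_K$.

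For (\ref{C012}) the same computation carries through with $h$ replaced by $h^{-1}$, using $D(h^{-1}) = -h^{-1}\circ D(h)\circ h^{-1}$ and $(h^{-1})^{\ast K}=h^{-1}$; the overall sign flip on the left is accounted for by the minus sign in $D(h^{-1})$. For (\ref{C02}), I would pair with $s$ (which is $K$-self-adjoint since $h$ is positive $K$-self-adjoint) and run the analogous computation, using $\tr(D_K s) = d\tr s$ and the scalar Bochner identity $\tfrac{1}{4}\Delta\tr(s^2) = \tfrac{1}{2}\tr(s\,\Delta s) + \tfrac{1}{2}|D_K s|_K^2$; the mixed gradient term $\langle D(h)\circ h^{-1},\, D(s)\rangle_K$ on the right arises because the $h$-conjugation in (\ref{L1}) always produces the factor $D(h)\circ h^{-1}$, which then gets paired against $D(s)$ once the pointwise pairing is taken with $s$ instead of $h$. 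The main obstacle will be the meticulous bookkeeping of adjoints with respect to the two metrics $H$ and $K$ and the non-commutativity between $h$, $s$, and the various endomorphism-valued quantities; once that algebra is organized, the emergence of the Laplacian terms from $\tr(D_K h) = d\tr h$ and its $s$-analogue is the conceptual heart of the computation.
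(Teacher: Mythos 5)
Your proposal is correct and follows essentially the same route as the paper: the paper starts from the conjugation formulas (\ref{L7})--(\ref{L8}) (equivalent to your combination of (\ref{C120}) with (\ref{L1})--(\ref{L2})), expands $\sigma^{-1}\circ(\sigma(D))_{K}^{\ast}\psi_{\sigma(D),K}\circ\sigma-D_{K}^{\ast}\psi_{D,K}$ pointwise in normal coordinates, and extracts a divergence term plus the quadratic correction exactly as you describe. The one step to handle carefully in the $s$-case is that the Laplacian term comes from the trace identity $\tr(D_{i}(h)\circ h^{-1}\circ s)=\tr(s\circ D_{i}s)=\tfrac{1}{2}\partial_{i}|s|_{K}^{2}$, which holds even though $D(h)\circ h^{-1}\neq D(s)$ when $s$ and $Ds$ do not commute; the cross term must therefore be left as $-\tfrac{1}{2}\langle D(h)\circ h^{-1},D(s)\rangle_{K}$, as you correctly state, and not simplified to $-\tfrac{1}{2}|D(s)|_{K}^{2}$.
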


\begin{proof}By (\ref{L7}) and (\ref{L8}), and choosing local normal coordinates $\{x^{i}\}_{i=1}^{n}$ centered at the considered point, one can easily check that
\begin{equation}\label{C130}
\begin{split}
&\sigma^{-1}\circ (\sigma (D))_{K}^{\ast }\psi_{\sigma(D), K}\circ \sigma =h^{-1}\circ (D_{K}^{\ast }\psi_{D, K} -\frac{1}{2}D_{K}^{\ast }(D(h) \circ h^{-1}))\circ h \\
&+\frac{1}{2}g^{ij}h^{-1}\circ (D_{\frac{\partial }{\partial x^{i}}}(h) \circ h^{-1} \circ \psi_{D, K}(\frac{\partial }{\partial x^{j}})-\psi_{D, K}(\frac{\partial }{\partial x^{j}}) \circ D_{\frac{\partial }{\partial x^{i}}}(h) \circ h^{-1})\circ h ,
\end{split}
\end{equation}
and then
\begin{equation}\label{C021}
\begin{split}
&\langle \sigma^{-1}\circ (\sigma (D))_{K}^{\ast }\psi_{\sigma(D), K}\circ \sigma -D_{K}^{\ast }\psi_{D, K}, s\rangle_{K}\\
=&\langle -\frac{1}{2}D_{K}^{\ast }(D(h) \circ h^{-1}), h^{-1}\circ s \circ h \rangle_{K}\\
&+ \langle \frac{1}{2}g^{ij}\circ (D_{\frac{\partial }{\partial x^{i}}}(h) \circ h^{-1} \circ \psi_{D, K}(\frac{\partial }{\partial x^{j}})-\psi_{D, K}(\frac{\partial }{\partial x^{j}}) \circ D_{\frac{\partial }{\partial x^{i}}}(h) \circ h^{-1}), h^{-1}\circ s \circ h \rangle_{K}\\
=& -\frac{1}{2}\tr (D_{K}^{\ast }(D(h) \circ h^{-1})\circ s)\\
&+ \frac{1}{2}g^{ij}\tr (D_{\frac{\partial }{\partial x^{i}}}(h) \circ h^{-1} \circ (\psi_{D, K}(\frac{\partial }{\partial x^{j}})\circ s -s\circ \psi_{D, K}(\frac{\partial }{\partial x^{j}})))\\
=& \frac{1}{2}g^{ij}\frac{\partial }{\partial x^{j}}\tr (D_{\frac{\partial }{\partial x^{i}}}(h) \circ h^{-1}\circ s)\\
&-\frac{1}{2}g^{ij}\tr (D_{\frac{\partial }{\partial x^{i}}}(h) \circ h^{-1} \circ (D_{K, \frac{\partial }{\partial x^{j}}}(s)-\psi_{D, K}(\frac{\partial }{\partial x^{j}})\circ s +s\circ \psi_{D, K}(\frac{\partial }{\partial x^{j}})))\\
=&\frac{1}{4}\Delta |s|_{K}^{2} -\frac{1}{2}\langle D(h) \circ h^{-1}, D(s)\rangle_{K}.
\end{split}
\end{equation}
Here we have used the following identity
\begin{equation}
\tr (D_{\frac{\partial }{\partial x^{i}}}(h) \circ h^{-1}\circ s)=\tr (s\circ D_{\frac{\partial }{\partial x^{i}}}s).
\end{equation}
Immediately (\ref{C01}) and (\ref{C012}) can be proved in a similar way.
\end{proof}

\begin{lemma}\label{lem:x1}
Let $(E, \hat{D})$ be a flat complex vector bundle on a compact Riemannian manifold $(M, g)$, and $K$ be a Hermitian metric on $E$. Assume $\hat{D}_{K}^{\ast }\psi_{\hat{D}, K}=0$, then
$(E, \hat{D})$ must be semi-simple.
\end{lemma}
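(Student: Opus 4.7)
My plan is to show that every $\hat{D}$-invariant subbundle $S\subset E$ admits a $\hat{D}$-invariant $K$-orthogonal complement; iterating on rank then decomposes $(E,\hat{D})$ into a direct sum of simple invariant subbundles, which is the semi-simplicity we want. So fix such an $S$, let $\pi\in\Gamma(\End(E))$ be the $K$-orthogonal projection onto $S$ (so $\pi^{2}=\pi=\pi^{\ast K}$), and split $E=S\oplus S^{\perp}$ with induced metrics $K_{1},K_{2}$. The $\hat{D}$-invariance of $S$ says that, relative to this splitting,
\[
\hat{D}=\begin{pmatrix}\hat{D}_{1} & \gamma\\ 0 & \hat{D}_{2}\end{pmatrix}
\]
for some $\gamma\in\Omega^{1}(\Hom(S^{\perp},S))$, and $S^{\perp}$ is $\hat{D}$-invariant if and only if $\gamma=0$.

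Applying the defining identity (2.2) together with the $K$-orthogonality of $S$ and $S^{\perp}$ to compute the $K$-self-adjoint part of $\hat{D}$ blockwise, I get
\[
\psi_{K}=\begin{pmatrix}\psi_{1} & \alpha\\ \alpha^{\ast K} & \psi_{2}\end{pmatrix},\qquad D_{K}=\hat{D}-\psi_{K}=\begin{pmatrix}D_{K_{1}} & \alpha\\ -\alpha^{\ast K} & D_{K_{2}}\end{pmatrix},
\]
with $\alpha=\tfrac{1}{2}\gamma$ and $\psi_{i}=\psi_{\hat{D}_{i},K_{i}}$; note the antisymmetric sign pattern of the off-diagonals of $D_{K}$ as opposed to the symmetric one of $\psi_{K}$. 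A direct calculation of the commutator $[D_{K},\pi]$ in these coordinates then yields
\[
D_{K}(\pi)=\begin{pmatrix}0 & -\alpha\\ -\alpha^{\ast K} & 0\end{pmatrix},
\]
which is $K$-self-adjoint, as expected since $D_{K}$ is $K$-unitary and $\pi$ is $K$-self-adjoint.

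Now I invoke the harmonic hypothesis. Since $\pi=\pi^{\ast K}$ and $\hat{D}_{K}^{\ast}\psi_{\hat{D},K}=0$, integration by parts on the compact manifold $M$ gives
\[
0=\int_{M}\langle \hat{D}_{K}^{\ast}\psi_{\hat{D},K},\pi\rangle_{K}\,dV_{g}=\int_{M}\langle \psi_{K},D_{K}(\pi)\rangle_{K}\,dV_{g}.
\]
Expanding the integrand using the two block matrices and taking fibrewise traces, each of the two diagonal blocks of $(\psi_{K})_{i}(D_{K}(\pi))_{j}$ contributes $-g^{ij}\tr(\alpha_{i}\alpha^{\ast K}_{j})=-|\alpha|_{K}^{2}$ after contracting with $g$, so the pointwise identity $\langle\psi_{K},D_{K}(\pi)\rangle_{K}=-2|\alpha|_{K}^{2}$ drops out. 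Therefore $\int_{M}|\alpha|^{2}_{K}\,dV_{g}=0$, which forces $\alpha\equiv 0$ and hence $\gamma=0$; so $S^{\perp}$ is $\hat{D}$-invariant, $\hat{D}=\hat{D}_{1}\oplus\hat{D}_{2}$, and since $D_{K}$ and $\psi_{K}$ are then block-diagonal, the harmonic condition descends to each factor $(S,\hat{D}_{1},K_{1})$ and $(S^{\perp},\hat{D}_{2},K_{2})$. A rank induction then finishes the proof. The only step demanding real care is the blockwise derivation of $D_{K}$, $\psi_{K}$ and $D_{K}(\pi)$ above; everything else is formal algebra and a standard integration by parts.
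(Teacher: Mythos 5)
Your proof is correct and follows essentially the same route as the paper: decompose $E=S\oplus S^{\perp}$ with respect to $K$, observe that the second fundamental form enters $\psi_{K}$ symmetrically and $\hat{D}_{K}$ antisymmetrically with a factor $\tfrac12$, and integrate $\langle \hat{D}_{K}^{\ast}\psi_{\hat{D},K},\pi\rangle_{K}$ by parts to force $\int_{M}|\alpha|_{K}^{2}=0$ (the paper does the same computation by extracting the $(1,1)$-block of $\hat{D}_{K}^{\ast}\psi_{\hat{D},K}$ and pairing with $\Id_{S}$, which is the same integral). The only cosmetic difference is that the paper takes $S$ of minimal rank so each summand produced is simple and inducts on the quotient, whereas you take an arbitrary invariant subbundle and induct on rank; both are fine.
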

\begin{proof}
Suppose that $(E, \hat{D})$ is not simple, and choose a $\hat{D}$-invariant  sub-bundle $S$ which is minimal rank. Then there exists an exact sequence
\begin{equation}
0\rightarrow S\rightarrow E \rightarrow Q \rightarrow 0.
\end{equation}
Denote $D_{S}$ and $D_{Q}$ (respectively, $K_{S}$ and $K_{Q}$) the connections (respectively, metrics) on the sub-bundle $S$ and the quotient bundle $Q$ induced by the connection $\hat{D}$ (respectively, metric $K$). For the Hermitian metric $K$ on $E$, we have the following bundle isomorphism
\begin{equation}\label{is1}
f_{K}: S\oplus Q \rightarrow E , \qquad (X, [Y])\mapsto i(X) +(\Id_{E}-\pi _{K})(Y),
\end{equation}
 where $X\in \Gamma(S)$, $Y\in \Gamma(E) $, $i:S\hookrightarrow E$ is the inclusion and $\pi_{K}: E\rightarrow E$ is the orthogonal projection into $S$ with respect to the metric $K$. Since $S$ is $\hat{D}$-invariant, we know
 \begin{equation}\label{pi1}
 \pi_{K}=(\pi_{K})^{2}=(\pi_{K})^{\ast K}
 \end{equation}
and
 \begin{equation}\label{pi2}
 (\Id_{E}-\pi_{K})\circ \hat{D}(\pi_{K})=0.
 \end{equation}

 By the definition, the pulling back metric is
\begin{equation}
f_{K}^{\ast}(K)=
\begin{pmatrix}
K_{S} &  0 \\
0   & K_{Q}\\
\end{pmatrix},
\end{equation}
and the pulling back connection is
\begin{equation}
f_{K}^{\ast}(\hat{D})=\begin{pmatrix}
D_{S} &  \beta  \\
0   & D_{Q}\\
\end{pmatrix},
\end{equation}
where $\beta \in \Omega ^{1}({\rm Hom} (Q, S))$  will be called  the  second fundamental form. One can check that
\begin{equation}\label{pi3}
\beta ([Y])=-\pi_{K}\circ (\hat{D}\pi_{K})(Y),
\end{equation}
where $Y\in \Gamma(E)$.
Because $\hat{D}$ is flat, we have
\begin{equation}
D_{S}^{2}=0, \quad D_{Q}^{2}=0, \quad D_{S}\circ \beta +\beta \circ D_{Q}=0.
\end{equation}
It is easy to see that
\begin{equation}\label{Q9}
f_{K}^{\ast }(\psi_{\hat{D}, K})=\begin{pmatrix}
\psi_{D_{S}, K_{S}} &  \frac{1}{2}\beta  \\
\frac{1}{2}\beta^{\ast }   & \psi_{D_{Q}, K_{Q}}\\
\end{pmatrix},
\end{equation}

\begin{equation}
f_{K}^{\ast }(\hat{D}_{ K})=\begin{pmatrix}
D_{K_{S}} &  \frac{1}{2}\beta  \\
-\frac{1}{2}\beta^{\ast }   & D_{K_{Q}}\\
\end{pmatrix},
\end{equation}
where $\beta^{\ast}\in \Omega ^{1}(\Hom (Q, S))$ is the adjoint of $\beta $ with respect to the metrics $K_{S}$ and $K_{Q}$.
In the following,  we choose the normal coordinates $\{x^{i}\}_{i=1}^{n}$ centered at the considered point $p\in M$. A direct calculation yields
\begin{equation}\label{codazzi1}
\begin{split}
&f_{K}^{-1}\circ \hat{D}_{K}^{\ast} \psi_{\hat{D}, K} \circ f_{K}= f_{K}^{\ast }(\hat{D}_{ K})^{\ast }\{f_{K}^{\ast }(\psi_{\hat{D}, K})\}\\&=\begin{pmatrix}
D_{K_{S}}^{\ast}\psi_{D_{S}, K_{S}} -\frac{1}{2}g^{ij}\beta_{i}\circ \beta_{j}^{\ast} &  \frac{1}{2}D_{K, Q^{\ast}\otimes S}^{\ast }\beta -\frac{1}{2}g^{ij}(\beta_{i}\circ \psi_{Q, j}-\psi_{S, j}\circ \beta_{i}) \\
\frac{1}{2}D_{K, S^{\ast}\otimes Q}^{\ast }\beta^{\ast } +\frac{1}{2}g^{ij}(\beta_{i}^{\ast}\circ \psi_{S, j}-\psi_{Q, j}\circ \beta_{i}^{\ast})   & D_{K_{Q}}^{\ast}\psi_{D_{Q}, K_{Q}} +\frac{1}{2}g^{ij}\beta_{i}^{\ast }\circ \beta_{j}\\
\end{pmatrix},
\end{split}
\end{equation}
where $\beta_{i}=\beta (\frac{\partial }{\partial x^{i}})$, $\beta_{j}^{\ast}=\beta^{\ast} (\frac{\partial }{\partial x^{j}})$, $\psi_{S, i}=\psi_{S}(\frac{\partial }{\partial x^{i}})$ and $\psi_{Q, j}=\psi_{Q}(\frac{\partial }{\partial x^{j}})$. Due to $\hat{D}_{K}^{\ast} \psi_{\hat{D}, K}=0$, (\ref{codazzi1}) implies
\begin{equation}
D_{K_{S}}^{\ast}\psi_{D_{S}, K_{S}} -\frac{1}{2}g^{ij}\beta_{i}\circ \beta_{j}^{\ast}=0,
\end{equation}
and
\begin{equation}
\int_{M}|\beta |^{2}dV_{g}=2\int_{M}\langle D_{K_{S}}^{\ast}\psi_{D_{S}, K_{S}}, \Id_{S}\rangle_{K_{S}}dV_{g}=0.
\end{equation}
So $(E, \hat{D})\cong (S, D_{S})\oplus (Q, D_{Q})$, where $(S, D_{S})$ is a simple flat bundle and $(Q, D_{Q})$ is a flat bundle with $D_{K_{Q}}^{\ast}\psi_{D_{Q}, K_{Q}}=0$. Applying the above argument to $(Q, D_{Q})$, we obtain an isomorphism
\begin{equation}
(E, \hat{D})\cong \oplus_{i=1}^{l} (Q_{i}, D_{Q_{i}}),
\end{equation}
where every $(Q_{i}, D_{Q_{i}})$ is a simple flat bundle.
\end{proof}

\medskip

Let $\sigma (t)$ be a solution of the heat flow (\ref{H1}), i.e. it satisfies
\begin{equation}
\frac{\partial \sigma(t) }{\partial t}\circ \sigma^{-1}(t) = (\sigma(t) \{D\})_{K}^{\ast }\psi_{\sigma (t) \{D\}, K},
\end{equation}
then
\begin{equation}
\begin{split}
\frac{\partial }{\partial t}\sigma(t) \{D\}&= -\sigma(t) \{D\} (\frac{\partial \sigma (t)}{\partial t}\circ \sigma^{-1}(t))\\
&=-\sigma(t) \{D\} ((\sigma(t) \{D\})_{K}^{\ast }\psi_{\sigma (t) \{D\}, K}).\\
\end{split}
\end{equation}
Considering the self-adjoint and anti-self-adjoint parts of the above identity,  we have
\begin{equation}
\frac{\partial }{\partial t}\sigma(t) \{D\}_{K}
=-[\psi_{\sigma (t) \{D\}, K}, (\sigma(t) \{D\})_{K}^{\ast }\psi_{\sigma (t) \{D\}, K}],
\end{equation}
and
\begin{equation}
\frac{\partial }{\partial t}\psi_{\sigma (t) \{D\}, K}
=-\sigma(t) \{D\}_{K} ((\sigma(t) \{D\})_{K}^{\ast }\psi_{\sigma (t) \{D\}, K}).
\end{equation}

Let's recall some basic estimates of the heat flows (\ref{H1}) and (\ref{H2}).

\begin{lemma}\label{b1}{\bf(\cite{Cor})}
Let $(E, D)$ be a flat complex vector bundle on a compact Riemannian manifold $(M, g)$, and $K$ be a Hermitian metric on $E$. If  $\sigma (t)$ is a solution of the heat flow (\ref{H1}), then we have
\begin{equation}\label{C11}
\frac{d}{dt}\|\psi_{\sigma (t) \{D\}, K}\|_{L^2}^{2}=-2\|(\sigma(t) \{D\})_{K}^{\ast }\psi_{\sigma (t) \{D\}, K}\|_{L^2}^{2},
\end{equation}
\begin{equation}\label{C12}
\begin{split}
&(\Delta -\frac{\partial }{\partial t})|\psi_{\sigma (t) \{D\}, K}|_{K}^{2}
=2|\nabla^{(\sigma(t) \{D\})_{K}}\psi_{\sigma (t) \{D\}, K}|_{K}^{2}\\ &+2\langle\psi_{\sigma (t) \{D\}, K}\circ Ric , \psi_{\sigma (t) \{D\}, K}\rangle_{K} +2 |[\psi_{\sigma (t) \{D\}, K}, \psi_{\sigma (t) \{D\}, K}]|_{K}^{2}
\end{split}
\end{equation}
and
\begin{equation}\label{C13}
\begin{split}
&(\Delta -\frac{\partial }{\partial t})|(\sigma(t) \{D\})_{K}^{\ast }\psi_{\sigma (t) \{D\}, K}|_{K}^{2}\\
&= 2|(\sigma(t) \{D\})_{K}((\sigma(t) \{D\})_{K}^{\ast }\psi_{\sigma (t) \{D\}, K})|_{K}^{2}+2|[ \psi_{\sigma (t) \{D\}, K}, (\sigma(t) \{D\})_{K}^{\ast }\psi_{\sigma (t) \{D\}, K}]|_{K}^{2}.
\end{split}
\end{equation}
\end{lemma}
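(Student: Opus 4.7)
The plan is to prove the three identities of Lemma~\ref{b1} by direct computation: (\ref{C11}) follows from the gradient-flow structure of the heat flow (\ref{H1}), while (\ref{C12}) and (\ref{C13}) come from Bochner--Weitzenb\"ock formulas combined with the flatness relations (\ref{co1}) and (\ref{D1}) applied to the evolved flat connection $\sigma(t)\{D\}$.

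For (\ref{C11}), abbreviate $\psi_t := \psi_{\sigma(t)\{D\}, K}$ and $\mathcal{D}_t := (\sigma(t)\{D\})_K$. The evolution equation $\partial_t \psi_t = -\mathcal{D}_t(\mathcal{D}_t^{\ast}\psi_t)$ was derived immediately before the statement. I would differentiate $\|\psi_t\|_{L^2}^2$ under the integral sign (the reference metric $K$ is fixed in time), substitute this evolution, and integrate by parts using that $\mathcal{D}_t^{\ast}$ is the formal $L^2$-adjoint of $\mathcal{D}_t$ on $\End(E)$-valued $1$-forms with respect to $K$ and $g$; this immediately produces $-2\|\mathcal{D}_t^{\ast}\psi_t\|_{L^2}^2$.

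For (\ref{C12}), I would start from $\Delta |\psi_t|_K^2 = 2|\nabla^{\mathcal{D}_t}\psi_t|_K^2 + 2\,\mathrm{Re}\langle (\nabla^{\mathcal{D}_t})^{\ast}\nabla^{\mathcal{D}_t}\psi_t,\psi_t\rangle_K$ and apply the Bochner--Weitzenb\"ock formula for $\End(E)$-valued $1$-forms to rewrite the rough Laplacian in terms of the Hodge Laplacian $(\mathcal{D}_t\mathcal{D}_t^{\ast} + \mathcal{D}_t^{\ast}\mathcal{D}_t)\psi_t$, a base-Ricci contribution $\psi_t\circ \Ric$, and a bundle-curvature contribution determined by $F_{\mathcal{D}_t}$. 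Since $\sigma(t)\{D\}$ is flat, (\ref{co1}) gives $\mathcal{D}_t\psi_t = 0$ so the Hodge Laplacian collapses to $\mathcal{D}_t\mathcal{D}_t^{\ast}\psi_t$, while (\ref{D1}) gives $F_{\mathcal{D}_t} = -\psi_t \wedge \psi_t$, turning the bundle-curvature term into the commutator $[\psi_t,\psi_t]$ acting on $\psi_t$. Combined with the evolution $\partial_t|\psi_t|_K^2 = -2\,\mathrm{Re}\langle \mathcal{D}_t\mathcal{D}_t^{\ast}\psi_t,\psi_t\rangle_K$ and standard integration by parts identities to produce $|[\psi_t,\psi_t]|^2$ from the pairing, the identity (\ref{C12}) follows after collecting terms. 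For (\ref{C13}), I would repeat the argument with $\eta_t := \mathcal{D}_t^{\ast}\psi_t$: differentiating its definition and using $\partial_t\psi_t = -\mathcal{D}_t\eta_t$ together with the evolution $\partial_t\mathcal{D}_t = -[\psi_t,\eta_t]$ already recorded before the statement yields an equation of the form $\partial_t\eta_t = -\mathcal{D}_t^{\ast}\mathcal{D}_t\eta_t + (\text{terms involving }[\psi_t,\eta_t])$. Applying Bochner--Weitzenb\"ock to the section $\eta_t$ of $\End(E)$ and again using (\ref{D1}) to convert the bundle-curvature action into a $[\psi_t,\cdot]$ commutator, the algebra matches $\partial_t|\eta_t|_K^2$ and reproduces the two manifestly non-negative terms on the right-hand side of (\ref{C13}); self-adjointness of $\eta_t$ (noted after (\ref{C120})) is used to consolidate the commutator contributions.

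The main obstacle is the bookkeeping in the Weitzenb\"ock step: identifying the bundle-curvature action $F_{\mathcal{D}_t}\cdot$ on $\End(E)$-valued forms as an adjoint action, using (\ref{D1}) to rewrite it as $-[\psi_t\wedge\psi_t,\cdot]$, and tracking self-adjoint versus anti-self-adjoint pieces so that the Ricci and commutator terms emerge with the correct signs and combinatorial coefficients. The Riemannian geometry of $(M,g)$ enters only through the standard $\Ric$ term coming from the Weitzenb\"ock on $1$-forms; once the curvature/flatness accounting is carried out carefully, all three identities reduce to routine computation.
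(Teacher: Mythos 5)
Your outline is correct, and in fact the paper offers no proof of this lemma at all: it is stated with the attribution \cite{Cor} and used as a black box, so the only comparison available is with Corlette's original derivation, which your computation essentially reproduces. The three steps are sound: (\ref{C11}) is the $L^2$ gradient-flow identity obtained from $\partial_t\psi_t=-\mathcal{D}_t(\mathcal{D}_t^{\ast}\psi_t)$ plus one integration by parts; (\ref{C12}) follows from Bochner--Weitzenb\"ock on $\End(E)$-valued $1$-forms with the Hodge Laplacian collapsing via (\ref{co1}) and the curvature term rewritten via (\ref{D1}). Two small imprecisions worth fixing: (\ref{C12}) and (\ref{C13}) are \emph{pointwise} identities, so no integration by parts is involved --- the term $|[\psi_t,\psi_t]|_K^2$ comes from pointwise adjoint manipulation using $\psi_t^{\ast K}=\psi_t$; and in (\ref{C13}) the commutator term $|[\psi_t,\mathcal{D}_t^{\ast}\psi_t]|_K^2$ does not come from a Weitzenb\"ock curvature contribution (there is none for a section of $\End(E)$, where the rough Laplacian equals $-\mathcal{D}_t^{\ast}\mathcal{D}_t$ exactly) but solely from the evolution $\partial_t\mathcal{D}_{t,K}=-[\psi_t,\mathcal{D}_t^{\ast}\psi_t]$ entering when you differentiate $\mathcal{D}_t^{\ast}\psi_t$ in $t$; your invocation of (\ref{D1}) there is superfluous. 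Neither point affects the validity of the argument.
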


\begin{prop}\label{b12}{\bf(\cite{Cor})}
Let $(E, D)$ be a flat complex vector bundle on a compact Riemannian manifold $(M, g)$, and $K$ be a Hermitian metric on $E$. The harmonic flow (\ref{H1}) has a long time solution $\sigma (t)$ for $t\in [0, \infty )$. Furthermore, for every sequence $t_{i}\rightarrow \infty $ there exists
a subsequence $t_j$ such that $\sigma(t_{j}) \{D\}=(\sigma(t_{j}) \{D\})_{K}+ \psi_{\sigma (t_{j}) \{D\}, K}$ converges weakly,  modulo $K$-unitary gauge transformations, to a flat connection $D_{\infty}=D_{\infty , K}+ \psi_{\infty ,K}$ in $L_{1}^{p}$-topology,  and $D_{\infty ,K}^{\ast}\psi_{\infty , K}=0$.
\end{prop}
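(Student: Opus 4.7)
The plan is to combine standard parabolic theory with the evolution identities in Lemma \ref{b1} and an Uhlenbeck-type weak compactness for flat connections. First, short-time existence of $\sigma(t)$ is standard: the flow equation $\partial_{t}\sigma\cdot\sigma^{-1}=(\sigma\{D\})_{K}^{\ast}\psi_{\sigma\{D\},K}$ is essentially a non-linear heat equation in $\sigma$, and admits a smooth solution on a maximal interval $[0,T_{\max})$. To upgrade this to $T_{\max}=\infty$, invoke (\ref{C12}): the Ricci term is bounded below by $-C|\psi|_{K}^{2}$, so the parabolic maximum principle yields $\max_{M}|\psi(t)|_{K}^{2}\leq e^{Ct}\max_{M}|\psi(0)|_{K}^{2}$. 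Similarly (\ref{C13}) gives $(\Delta-\partial_{t})|D_{K}^{\ast}\psi|_{K}^{2}\geq 0$, so $\sup_{M}|D_{K}^{\ast}\psi|_{K}$ is non-increasing in $t$, hence uniformly bounded. With these pointwise bounds on $\psi$ and on $\partial_{t}\sigma\circ\sigma^{-1}$, standard parabolic bootstrapping controls $\sigma(t)$ and all its derivatives on compact time intervals; this ensures $\sigma$ remains invertible and the flow extends for all time.

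Next, integrating the energy identity (\ref{C11}) from $0$ to $\infty$ gives
\[
\int_{0}^{\infty}\|(\sigma(t)\{D\})_{K}^{\ast}\psi_{\sigma(t)\{D\},K}\|_{L^{2}}^{2}\,dt\leq\tfrac{1}{2}\|\psi_{D,K}\|_{L^{2}}^{2}<\infty,
\]
so some sequence $t_{i}\to\infty$ satisfies $\|(\sigma(t_{i})\{D\})_{K}^{\ast}\psi_{\sigma(t_{i})\{D\},K}\|_{L^{2}}\to 0$. Each $\sigma(t_{i})\{D\}$ is flat, and the uniform bounds on $\psi$ and on $D_{K}^{\ast}\psi$ together with the vanishing curvature provide exactly the hypotheses needed for Uhlenbeck's weak compactness: after passing to a further subsequence one finds $u_{i}\in\mathcal{U}_{K}$ such that $A_{i}:=u_{i}\cdot\sigma(t_{i})\{D\}\cdot u_{i}^{-1}$ converges weakly in $L_{1}^{p}$ (for any $p>n$) to a flat $L_{1}^{p}$ connection $D_{\infty}$. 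Because $u_{i}$ preserves $K$, the decomposition $A_{i}=A_{i,K}+\phi_{i}$ into $K$-unitary and $K$-self-adjoint parts passes to the weak limit, yielding $D_{\infty}=D_{\infty,K}+\psi_{\infty,K}$.

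To see that $D_{\infty,K}^{\ast}\psi_{\infty,K}=0$, apply the transformation formula (\ref{C120}) to the $K$-unitary gauge $u_{i}$:
\[
A_{i,K}^{\ast}\phi_{i}=u_{i}\circ(\sigma(t_{i})\{D\})_{K}^{\ast}\psi_{\sigma(t_{i})\{D\},K}\circ u_{i}^{-1}.
\]
Since $u_{i}$ is $K$-unitary, the $L^{2}$-norms agree, so $\|A_{i,K}^{\ast}\phi_{i}\|_{L^{2}}\to 0$. Passing to the weak limit in $L^{2}$ — which is valid because $D_{\cdot,K}^{\ast}\psi_{\cdot}$ is a continuous bilinear expression in the $L_{1}^{p}$ data via Sobolev embedding for $p>n$ — produces the desired identity $D_{\infty,K}^{\ast}\psi_{\infty,K}=0$.

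The principal obstacle lies in the long-time existence step: one must prevent $\sigma(t)$ from becoming singular (loss of invertibility or blow-up of higher derivatives) even though the underlying flow has gauge-type degeneracies. The maximum-principle arguments for the scalar quantities $|\psi|_{K}^{2}$ and $|D_{K}^{\ast}\psi|_{K}^{2}$, supplied by (\ref{C12}) and (\ref{C13}), are the essential ingredient; once these are controlled, parabolic bootstrapping is routine. A secondary subtlety is that the subsequential convergence after gauge fixing is only weak in $L_{1}^{p}$, so one needs the Sobolev-embedding condition $p>n$ to ensure the nonlinear equation $D_{K}^{\ast}\psi=0$ passes continuously to the limit.
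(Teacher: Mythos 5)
The paper offers no proof of this proposition; it is quoted from Corlette \cite{Cor}, so your reconstruction can only be measured against the standard argument there, which it follows in outline. One genuine gap: the proposition asserts the conclusion for \emph{every} sequence $t_i\to\infty$ (after passing to a subsequence), whereas your argument extracts \emph{one} good sequence along which $\|(\sigma(t_i)\{D\})_{K}^{\ast}\psi_{\sigma(t_i)\{D\},K}\|_{L^2}\to 0$ from the integrability supplied by (\ref{C11}). As written this proves a strictly weaker statement. The fix is short: integrating (\ref{C13}) over $M$ gives $\frac{d}{dt}\|(\sigma(t)\{D\})_{K}^{\ast}\psi_{\sigma(t)\{D\},K}\|_{L^2}^{2}\leq 0$, so this quantity is non-negative, non-increasing and integrable on $[0,\infty)$ by (\ref{C11}), hence tends to zero as $t\to\infty$; the vanishing then holds along any sequence and the rest of your argument applies unchanged. (Alternatively, the heat-kernel mean-value argument the paper uses to derive (\ref{C15}) applies verbatim to the subsolution $|(\sigma(t)\{D\})_{K}^{\ast}\psi_{\sigma(t)\{D\},K}|_{K}^{2}$.)

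A secondary imprecision is the compactness step. Uhlenbeck compactness concerns connections with $L^p$-bounded curvature, and the connection to which it must be applied is the unitary part $(\sigma(t_i)\{D\})_{K}$, whose curvature equals $-\psi\wedge\psi$ by (\ref{D1}) and is therefore bounded in $L^{\infty}$ by your sup bound on $\psi$; the flatness of the full connection is not by itself the relevant hypothesis. Similarly, weak $L_{1}^{p}$ compactness of the self-adjoint part is not automatic from an $L^{\infty}$ bound: you need the first-order elliptic system coming from (\ref{co1}), namely $D_{K}\psi=0$ together with the uniform bound on $D_{K}^{\ast}\psi$, to control $\psi$ in $L_{1}^{p}$ in the Coulomb gauges. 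With these two repairs your outline is a correct account of Corlette's proof; the long-time existence via the maximum principle applied to (\ref{C12}) and (\ref{C13}), and the passage to the limit of the equation $D_{\infty,K}^{\ast}\psi_{\infty,K}=0$ using (\ref{C120}) and the Sobolev embedding for $p>n$, are handled correctly.
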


\medskip

In the following, we will show that the above convergence can be strengthened to in $C^{\infty}$-topology.
Using (\ref{C12}), we deduce
\begin{equation}
(\Delta -\frac{\partial }{\partial t})|\psi_{\sigma (t) \{D\}, K}|_{K}^{2}\geq -C_{1}|\psi_{\sigma (t) \{D\}, K}|_{K}^{2},
\end{equation}
equivalently
\begin{equation}\label{C14}
(\Delta -\frac{\partial }{\partial t})(e^{-C_{1}t}|\psi_{\sigma (t) \{D\}, K}|_{K}^{2})\geq 0,
\end{equation}
where $C_{1}$ is a positive constant depending only on the Ricci curvature of $(M, g)$. Let $f(x, t)=\int_{M} \chi (x, y, t-t_{0})e^{-C_{1}t_{0}}|\psi_{\sigma (t_{0}) \{D\}, K}|_{K}^{2}(y) d V_{g} (y)$, where $\chi $ is the heat kernel of $(M, g)$. Of course (\ref{C14}) implies:
\begin{equation}
(\Delta -\frac{\partial }{\partial t} ) (e^{-C_{1}t}|\psi_{\sigma (t) \{D\}, K}|_{K}^{2} -f(x, t))\geq 0
\end{equation}
and
\begin{equation}
f(\cdot , t_{0})=e^{-C_{1}t_{0}}|\psi_{\sigma (t_{0}) \{D\}, K}|_{K}^{2}.
\end{equation}
From the maximum principle and (\ref{C11}), for any $t_{0}\geq 0$,  it follows that
\begin{equation}
\begin{split}
\max _{M} e^{-C_{1}(t_{0}+1)}|\psi_{\sigma (t_{0}+1) \{D\}, K}|_{K}^{2}\leq & \max _{M} f(x, t_{0}+1)\\
=& \int_{M} \chi (x, y, 1)e^{-C_{1}t_{0}}|\psi_{\sigma (t_{0}) \{D\}, K}|_{K}^{2}(y) d V_{g} (y)\\
\leq & C_{2}e^{-C_{1}t_{0}}\int_{M} |\psi_{\sigma (t_{0}) \{D\}, K}|_{K}^{2} dV_{g} \\
\leq & C_{2}e^{-C_{1}t_{0}}\int_{M} |\psi_{D, K}|_{K}^{2} dV_{g} ,\\
\end{split}
\end{equation}
and then
\begin{equation}\label{C15}
\max _{M} |\psi_{\sigma (t_{0}+1) \{D\}, K}|_{K}^{2}\leq C_{2}e^{C_{1}}\int_{M} |\psi_{D, K}|_{K}^{2} dV_{g},
\end{equation}
where $C_{2}$ is a positive constant depending only on the upper bound of $\chi (x, y, 1)$. Hence we know that $\sup_{M}|\psi_{\sigma (t)\{D\}, K}|_{K}$ is uniformly bounded.

Choosing local normal coordinates $\{x^{i}\}_{i=1}^{n}$ centered at the considered point, we have
\begin{equation}\label{h11}
\begin{split}
&\Delta |\nabla^{\sigma }\psi_{\sigma (t) \{D\}, K}|_{K}^{2}=  2 |\nabla^{\sigma }\nabla^{\sigma }\psi_{\sigma (t) \{D\}, K}|_{K}^{2}\\
&+2Re\{g^{ij}\langle \nabla^{\sigma }_{\frac{\partial }{\partial x^{i}}} \nabla^{\sigma }_{\frac{\partial }{\partial x^{i}}} \nabla^{\sigma }\psi_{\sigma (t) \{D\}, K}, \nabla^{\sigma }\psi_{\sigma (t) \{D\}, K} \rangle \},
\end{split}
\end{equation}
where $\nabla^{\sigma }$ is the covariant derivative induced by the connection $(\sigma(t) \{D\})_{K}$ and the Levi-Civita connection $\nabla $ of $(M, g)$. Denote
\begin{equation}
\nabla^{\sigma }\psi_{\sigma (t) \{D\}, K}=\psi_{m, l}dx^{m}\otimes dx^{l},
\end{equation}
and we have
\begin{equation}
g^{ij} \nabla^{\sigma }_{\frac{\partial }{\partial x^{i}}} \nabla^{\sigma }_{\frac{\partial }{\partial x^{i}}} \nabla^{\sigma }\psi_{\sigma (t) \{D\}, K}=g^{ij}\psi_{m, lji}dx^{m}\otimes dx^{l},
\end{equation}
where $\psi_{m, lji}$ denotes the component of the covariant derivative. According to the Ricci identity, one can check that
\begin{equation}\label{h12}
\begin{split}
g^{ij}\psi_{m, lji}=& g^{ij} (\psi_{j, iml} +\psi_{a, l}R^{a}_{jmi}+\psi_{a}R^{a}_{jmi, l}+\psi_{a, i}R^{a}_{mlj}+\psi_{a}R^{a}_{mlj,i}++\psi_{a, m}R^{a}_{jli}+\psi_{j,a}R^{a}_{mli}\\
&+[F_{im,l}, \psi_{j}]+[F_{im}, \psi_{j, l}]+[F_{il}, \psi_{j, m}]+[F_{jl,i}, \psi_{m}]+[F_{jl}, \psi_{m, i}]),
\end{split}
\end{equation}
where $R$ denotes the Riemannian curvature of $g$ and $F$ denotes the curvature of the connection $(\sigma(t) \{D\})_{K}$. By the harmonic flow (\ref{H1}), we have
\begin{equation}\label{h13}
\begin{split}
&\frac{\partial }{\partial t}|\nabla^{\sigma }\psi_{\sigma (t) \{D\}, K}|_{K}^{2} =-2Re\langle \nabla^{\sigma } ((\sigma(t) \{D\})_{K}((\sigma(t) \{D\})_{K}^{\ast }\psi_{\sigma (t) \{D\}, K})) , \nabla^{\sigma }\psi_{\sigma (t) \{D\}, K}\rangle \\
&-2Re \langle [[\psi_{\sigma (t) \{D\}, K}, (\sigma(t) \{D\})_{K}^{\ast }\psi_{\sigma (t) \{D\}, K}], \psi_{\sigma (t) \{D\}, K}] , \nabla^{\sigma }\psi_{\sigma (t) \{D\}, K}\rangle .
\end{split}
\end{equation}
On the other hand, it is straightforward to check that
\begin{equation}\label{h14}
\nabla^{\sigma } ((\sigma(t) \{D\})_{K}((\sigma(t) \{D\})_{K}^{\ast }\psi_{\sigma (t) \{D\}, K}))=-g^{ij}\psi_{j,iml}dx^{m}\otimes dx^{l}.
\end{equation}
From (\ref{h11}), (\ref{h12}), (\ref{h13}) and (\ref{h14}), we get
\begin{equation}\label{h1}
\begin{split}
&(\Delta -\frac{\partial }{\partial t})|\nabla^{\sigma }\psi_{\sigma (t) \{D\}, K}|_{K}^{2}
\geq 2|\nabla^{\sigma }\nabla^{\sigma }\psi_{\sigma (t) \{D\}, K}|_{K}^{2}\\ &-C_{3}(|Rm|+|\psi_{\sigma (t) \{D\}, K}|_{K}^{2})|\nabla^{\sigma }\psi_{\sigma (t) \{D\}, K}|_{K}^{2}\\ & -C_{4}(|\psi_{\sigma (t) \{D\}, K}|_{K}^{2}|(\sigma(t) \{D\})_{K}^{\ast }\psi_{\sigma (t) \{D\}, K}|_{K}+|\psi_{\sigma (t) \{D\}, K}|_{K}|\nabla Rm|)|\nabla^{\sigma }\psi_{\sigma (t) \{D\}, K}|_{K},\\
\end{split}
\end{equation}
where  $C_{3}$ and $C_{4}$ are uniform constants depending only on $\dim(M)$ and $\rank (E)$.  (\ref{C12}) and (\ref{C15}) yields
\begin{equation}
\int_{M\times [t_{0}, t_{0}+3 ]}|\nabla^{\sigma }\psi_{\sigma (t) \{D\}, K}|_{K}^{2} dV_{g} dt\leq C_{5}
\end{equation}
for all $t_{0}\geq 0$, where $C_{3}$ is a uniform constant. Making use of (\ref{h1}) and the Moser's parabolic estimate, one can derive
\begin{equation}
\sup_{M\times [t_{0}+1, t_{0}+2 ]}|\nabla^{\sigma }\psi_{\sigma (t) \{D\}, K}|_{K}^{2} \leq C_{6}.
\end{equation}
Furthermore, it holds that
\begin{equation}\label{h2}
\begin{split}
&(\Delta -\frac{\partial }{\partial t})|(\nabla^{\sigma })^{\alpha }\psi_{\sigma (t) \{D\}, K}|_{K}^{2}
\geq 2|(\nabla^{\sigma })^{\alpha +1}\psi_{\sigma (t) \{D\}, K}|_{K}^{2}\\&-C_{5}|(\nabla^{\sigma })^{\alpha }\psi_{\sigma (t) \{D\}, K}|_{K}^{2}-C_{6}|(\nabla^{\sigma })^{\alpha }\psi_{\sigma (t) \{D\}, K}|_{K},
\end{split}
\end{equation}
where $C_{5}$ and $C_{6}$ are uniform positive constants depending only on $|Rm|, \quad |\nabla Rm|$, $\cdots $, $|(\nabla )^{\alpha }Rm |$, and $|\psi_{\sigma (t) \{D\}, K}|_{K}, \cdots , |(\nabla^{\sigma })^{\alpha -1 }\psi_{\sigma (t) \{D\}, K}|_{K}$. Using (\ref{D1}), (\ref{h2}) and repeating the above argument, we have the following uniform $C^{\infty }$-estimates,
\begin{equation}
\sup_{[0, \infty ) \times M}(|(\nabla^{\sigma })^{\alpha }F_{(\sigma(t) \{D\})_{K}}|^{2}+|(\nabla^{\sigma })^{\alpha }\psi_{\sigma (t) \{D\}, K}|_{K}^{2})\leq \hat{C}_{\alpha }
\end{equation}
for every $0\leq \alpha <\infty $. Then, applying a result of Donaldson-Kronheimer(Theorem 2.3.7 in \cite{DK}) and  Hong-Tian's argument (Proposition 6 in \cite{HT}), we obtain the following proposition.

\begin{prop}\label{pp:0}
Let $(E, D)$ be a flat complex vector bundle over a compact Riemannian manifold $(M, g)$, and $K$ be a Hermitian metric on $E$. If  $\sigma (t)$ is a longtime solution of the heat flow (\ref{H1}), then for every sequence $t_{i}\rightarrow \infty $ there exists
a subsequence $t_j$ such that $\sigma(t_{j}) \{D\}=(\sigma(t_{j}) \{D\})_{K}+ \psi_{\sigma (t_{j}) \{D\}, K}$ converges,  modulo $K$-unitary gauge transformations, to a flat connection $D_{\infty}=D_{K, \infty}+ \psi_{\infty}$ in $C^{\infty}$-topology,  and $D_{K, \infty}^{\ast}\psi_{\infty}=0$.
\end{prop}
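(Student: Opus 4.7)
The plan is to upgrade the weak $L^{p}_{1}$ subsequential convergence of Proposition \ref{b12} to $C^{\infty}$ convergence by combining the uniform estimates
\[
\sup_{[0,\infty)\times M}\bigl(|(\nabla^{\sigma})^{\alpha}F_{(\sigma(t)\{D\})_{K}}|^{2}+|(\nabla^{\sigma})^{\alpha}\psi_{\sigma(t)\{D\},K}|_{K}^{2}\bigr)\le \hat{C}_{\alpha}
\]
just established with a local Coulomb-gauge fixing and patching procedure.

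First I would cover $M$ by finitely many small geodesic balls $\{B_{a}\}$ on which Uhlenbeck's local gauge-fixing theorem applies: since $|F_{(\sigma(t)\{D\})_{K}}|$ is uniformly bounded in $L^{\infty}$, each ball admits a $K$-unitary trivialization $u_{a}(t)$ in which the transformed unitary connection form $A_{a}(t)$ lies in Coulomb gauge, $d^{*}A_{a}(t)=0$, with $\|A_{a}(t)\|_{L^{p}_{1}}\le C\,\|F_{(\sigma(t)\{D\})_{K}}\|_{L^{p}}$. Bootstrapping the elliptic system $d^{*}A_{a}=0$, $dA_{a}+A_{a}\wedge A_{a}=F$ against the uniform $C^{\infty}$ bounds on $F$, I obtain uniform $C^{k}$ bounds on $A_{a}(t)$ on a slightly shrunken ball for every $k$. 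The Higgs component $\psi_{\sigma(t)\{D\},K}$, transformed by the same $u_{a}(t)$, inherits uniform $C^{k}$ bounds from the assumed estimate on $(\nabla^{\sigma})^{\alpha}\psi$.

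Next, by Arzel\`a--Ascoli and a diagonal subsequence argument one may assume $A_{a}(t_{j})$ and the transformed Higgs fields converge in $C^{\infty}_{\rm loc}(B_{a})$ for every chart. On overlaps, the transition maps $g_{ab}(t_{j})=u_{a}(t_{j})^{-1}u_{b}(t_{j})$ are $K$-unitary and, via the gauge-change relation between two Coulomb frames, satisfy an elliptic system whose coefficients are controlled by the $C^{k}$ bounds on $A_{a}$, $A_{b}$; a further diagonal extraction then gives $C^{\infty}$ convergence of the $g_{ab}(t_{j})$. This is precisely the patching procedure of Theorem 2.3.7 of \cite{DK}, applied in the Yang--Mills-flow context by Hong--Tian in Proposition 6 of \cite{HT}, and it assembles the local $C^{\infty}$ limits into a global smooth flat connection $D_{\infty}=D_{K,\infty}+\psi_{\infty}$ on a bundle that is $K$-unitarily isomorphic to $E$. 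Uniqueness of the weak $L^{p}_{1}$ limit identifies this $D_{\infty}$ with the one produced by Proposition \ref{b12}, and the identity $D_{K,\infty}^{\ast}\psi_{\infty}=0$ passes to the $C^{\infty}$-limit, using \eqref{C11}, which forces $\|(\sigma(t)\{D\})_{K}^{\ast}\psi_{\sigma(t)\{D\},K}\|_{L^{2}}\to 0$ along a suitable subsequence.

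The main obstacle is the global patching: the local Coulomb gauges are not canonical, so na\"ive products of the $u_{a}(t_{j})$ generally fail to glue to a well-defined global $K$-unitary automorphism of $E$. The saving point is that uniform $C^{\infty}$ control on the transition data $g_{ab}(t_{j})$ comes for free from the gauge-equivariance of the Coulomb equation together with the curvature bounds, so the Donaldson--Kronheimer argument applies without essential modification; the remaining work is bookkeeping to match the resulting smooth limit with the weak limit $D_{\infty}$ already known to exist.
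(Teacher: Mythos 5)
Your proposal is correct and follows essentially the same route as the paper: starting from the uniform bounds $\sup_{[0,\infty)\times M}(|(\nabla^{\sigma})^{\alpha}F_{(\sigma(t)\{D\})_{K}}|^{2}+|(\nabla^{\sigma})^{\alpha}\psi_{\sigma(t)\{D\},K}|_{K}^{2})\le\hat{C}_{\alpha}$ established just before the statement, the paper likewise concludes by invoking Theorem 2.3.7 of \cite{DK} together with Hong--Tian's argument (Proposition 6 of \cite{HT}), which is exactly the Coulomb-gauge fixing, elliptic bootstrapping, and patching procedure you spell out. Your additional remarks on identifying the smooth limit with the weak $L_{1}^{p}$ limit of Proposition \ref{b12} and on passing $D_{K,\infty}^{\ast}\psi_{\infty}=0$ to the limit via (\ref{C11}) are consistent with the paper's treatment.
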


\medskip


\section{Proof of Theorem \ref{thm0}}

 The following proposition about the existence of the Jordan-H\"{o}lder filtration and the uniqueness of the  graded flat complex vector bundle should be well known to experts, and we give a proof here for the reader's convenience.

\begin{prop}\label{JH}
Let $(E, D)$ be a flat complex vector bundle. There is a filtration of sub-bundles
\begin{equation}\label{HNS010}
0=E_{0}\subset E_{1}\subset \cdots \subset E_{i} \cdots \subset E_{l}=E ,
\end{equation}
such that every sub-bundle $E_{i}$ is $D$-invariant and every quotient bundle $(Q_{i}, D_{Q_{i}}):=(E_{i}/E_{i-1},  D_{Q_{i}})$ is flat and simple. Furthermore, the  graded flat complex vector bundle
$\oplus_{i=1}^{l}(Q_{i}, D_{Q_{i}})
$ is unique in the sense of isomorphism.
\end{prop}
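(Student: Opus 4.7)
The plan is to first establish existence by induction on the rank $r$ of $E$, and then prove uniqueness by a Jordan-H\"older/Schreier refinement argument adapted to the category of flat bundles.

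For existence, when $r=1$ the bundle $(E,D)$ is automatically simple and the filtration $0\subset E$ suffices. For $r>1$, if $(E,D)$ is simple we are done; otherwise choose a $D$-invariant sub-bundle $E_{1}\subset E$ of minimal positive rank. Minimality forces $(E_{1}, D|_{E_{1}})$ to admit no proper $D$-invariant sub-bundle, so it is simple. The quotient $(E/E_{1}, D_{E/E_{1}})$ is a flat complex vector bundle of strictly smaller rank, so by the inductive hypothesis it carries a filtration with simple flat quotients; pulling it back along the projection $\pi:E\to E/E_{1}$ and prepending $E_{1}$ produces the desired Jordan-H\"older filtration of $(E,D)$.

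One input needed throughout is that every $D$-invariant subsheaf of a flat bundle is automatically a sub-bundle, and that sums, intersections, and kernels of $D$-equivariant morphisms remain sub-bundles. I would justify this via the Riemann-Hilbert correspondence: on a simply connected neighborhood $U$, the flat sections of $E|_{U}$ form a finite-dimensional space $V$ canonically isomorphic to any fibre $E_{p}$, and a $D$-invariant subsheaf restricts to a constant-rank sub-bundle cut out by a fixed subspace $W\subset V$. Equivalently, a $D$-invariant subsheaf corresponds to a sub-representation of the monodromy representation $\tau:\pi_{1}(M)\to GL(r,\mathbb{C})$, and on the level of representations the subobject lattice is just the lattice of $\tau$-invariant subspaces of $\mathbb{C}^{r}$.

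For uniqueness, I would argue by induction on $l$, comparing two Jordan-H\"older filtrations $\{E_{i}\}_{i=0}^{l}$ and $\{F_{j}\}_{j=0}^{m}$ of $(E,D)$. If $E_{l-1}=F_{m-1}$, the top quotients coincide and the inductive hypothesis applied to $E_{l-1}$ finishes the job. If not, then $E_{l-1}+F_{m-1}$ is a $D$-invariant sub-bundle strictly containing $E_{l-1}$; since $E/E_{l-1}$ is simple this sum must equal $E$, and the second isomorphism theorem gives $E/E_{l-1}\cong F_{m-1}/(E_{l-1}\cap F_{m-1})$ together with $E/F_{m-1}\cong E_{l-1}/(E_{l-1}\cap F_{m-1})$. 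Choosing any Jordan-H\"older filtration of $K:=E_{l-1}\cap F_{m-1}$ and using it to refine both $E_{l-1}$ and $F_{m-1}$ yields two Jordan-H\"older filtrations of $E$ whose graded quotients agree up to a swap of the top two entries; a further application of the inductive hypothesis identifies these graded quotients with those of the original filtrations $\{E_{i}\}$ and $\{F_{j}\}$, which forces $l=m$ and produces the desired isomorphism of $Gr^{JH}$'s.

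The main obstacle, in my estimation, is the verification that all the subsheaf-theoretic operations invoked in the uniqueness step (sum, intersection, quotient, kernel of equivariant maps) yield honest sub-bundles rather than subsheaves with torsion. This is a genuine geometric input that has no analogue in formal module theory, but it is supplied cleanly by the Riemann-Hilbert translation to $\pi_{1}(M)$-representations, where every subobject is just a linear subspace and length is literally the composition length of the representation.
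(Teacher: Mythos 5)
Your proposal is correct, and the existence half coincides with the paper's (induction on rank, peeling off a $D$-invariant sub-bundle of minimal rank and lifting a Jordan--H\"older filtration of the quotient; the paper realizes the lift concretely via the metric splitting $E_{i}=E_{1}\oplus P^{\ast K}(\hat{Q}_{i-1})$, you via preimages under $\pi:E\to E/E_{1}$, which is the same thing). For uniqueness, however, you take a genuinely different route. You run the classical Schreier/Jordan--H\"older exchange argument in the abelian category of flat bundles: compare the top terms $E_{l-1}$ and $F_{m-1}$, use simplicity of the top quotients to get $E_{l-1}+F_{m-1}=E$, invoke the second isomorphism theorem to swap the two top graded pieces across the intersection, and induct. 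The paper instead writes both filtrations as upper-triangular connection matrices on $\oplus_{i}Q_{i}$ and $\oplus_{\alpha}\tilde{Q}_{\alpha}$, locates the lowest nonvanishing entry $\eta_{k1}$ in the first column of the transition map $\eta=f^{-1}\circ\tilde{f}$, uses Schur's lemma (simplicity of $\tilde{Q}_{1}$ and $Q_{k}$) to see $\eta_{k1}$ is an isomorphism, and then performs explicit Gaussian-elimination gauge transformations $A$, $B$ to strip off that factor and induct. Your argument is shorter and more structural, but it leans on the fact that $D$-invariant sub-bundles, sums, intersections, and kernels of $D$-equivariant maps form an abelian (sub)category with constant-rank objects; your justification via the Riemann--Hilbert correspondence (subobjects are exactly monodromy-invariant subspaces of the fibre, so all these operations stay locally constant in rank) is the right one and closes that gap. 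What the paper's computation buys in exchange is an explicit isomorphism between the two graded objects without ever invoking the equivalence with $\pi_{1}(M)$-representations, staying entirely in the differential-geometric language used in the rest of the paper. Both proofs are complete; yours is the more standard categorical argument, the paper's the more hands-on one.
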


\begin{proof}
Suppose that $(E, D)$ is not simple. Let $E_{1}$ be a $D$-invariant sub-bundle of $E$ with minimal rank. Then we have the following exact sequence
\begin{equation}
0\rightarrow E_{1}\xrightarrow{i_{0}} E \xrightarrow{P} Q \rightarrow 0,
\end{equation}
 and $(E_{1}, D_{E_{1}})$ is simple. By induction, we can assume that there is a Jordan-H\"{o}lder filtration of the flat bundle $(Q, D_{Q})$, i.e.
 \begin{equation}\label{HNS0100}
0=\hat{Q}_{0}\subset \hat{Q}_{1}\subset \cdots \subset \hat{Q}_{i} \cdots \subset \hat{Q}_{l-1}=Q .
\end{equation}
Choosing a Hermitian metric $K$ on $E$, we get a bundle isomorphism $P^{\ast K}: Q\rightarrow E_{1}^{\bot }$ , where $P^{\ast K}$ is the adjoint of the projection $P$ with respect to the metric $K$. Set
\begin{equation}
E_{i}=E_{1}\oplus P^{\ast K}(\hat{Q}_{i-1})
\end{equation}
for all $1<i \leq l$. One can easily  find that every $E_{i}$ is $D$-invariant and $(E_{i}/E_{i-1},  D_{Q_{i}})$ is simple. So, we obtain a Jordan-H\"{o}lder filtration of $(E, D)$.

Suppose that there is another Jordan-H\"{o}lder filtration of $(E, D)$,
\begin{equation}\label{HNS01000}
0=\tilde{E}_{0}\subset \tilde{E}_{1}\subset \cdots \subset \tilde{E}_{i} \cdots \subset \tilde{E}_{\tilde{l}}=E.
\end{equation}
We will show that
\begin{equation}\oplus_{i=1}^{l}(Q_{i}, D_{Q_{i}})\cong \oplus_{\alpha =1}^{\tilde{l}}(\tilde{Q}_{\alpha }, D_{\tilde{Q}_{\alpha }}).
\end{equation}
It is straightforward to check that there exist bundle isomorphisms  $f:\oplus_{i=1}^{l}Q_{i}\rightarrow E$ and  $\tilde{f}:\oplus_{\alpha =1}^{\tilde{l}}\tilde{Q}_{\alpha }\rightarrow E$ such that
\begin{equation}\label{311}
\begin{split}
f^{*}(D)=\left(\begin{array}{ccccc}
  D_{Q_{1}} &\cdots & \beta_{1l} \\
   \vdots  &\ddots & \vdots\\
  0  & \cdots & D_{Q_{l}}
\end{array}\right)
\end{split}
\end{equation}
and
\begin{equation}\label{312}
\begin{split}
\tilde{f}^{*}(D)=\left(\begin{array}{ccccc}
  D_{\tilde{Q}_{1}} &\cdots & \gamma_{1\tilde{l}} \\
   \vdots  &\ddots & \vdots\\
  0  & \cdots & D_{\tilde{Q}_{\tilde{l}}}
\end{array}\right).
\end{split}
\end{equation}
Let $\eta =f^{-1}\circ \tilde{f}=(\eta_{i\alpha })$, where $\eta_{i\alpha }\in \Hom(\tilde{Q}_{j}, Q_{i})$. (\ref{311}) and (\ref{312}) mean
\begin{equation}\label{313}
\begin{split}
\eta \circ \left(\begin{array}{ccccc}
  D_{\tilde{Q}_{1}} &\cdots & \gamma_{1\tilde{l}} \\
   \vdots  &\ddots & \vdots\\
  0  & \cdots & D_{\tilde{Q}_{\tilde{l}}}
\end{array}\right)=\left(\begin{array}{ccccc}
  D_{Q_{1}} &\cdots & \beta_{1l} \\
   \vdots  &\ddots & \vdots\\
  0  & \cdots & D_{Q_{l}}
\end{array}\right)\circ \eta .
\end{split}
\end{equation}

Assume that $\eta_{l1}=0$, $\cdots $, $\eta_{(k+1)1}=0$ and $\eta_{k1}\neq 0$.
We express $\eta $ as a partitioned matrix
\begin{equation}
\eta=\left(\begin{array}{ccccc}
  \check{\eta}_{11} & & \check{\eta}_{12} \\
   \eta_{k1}  & & \check{\eta}_{22}\\
  0  &  & \check{\eta}_{32}
\end{array}\right),
\end{equation}
where $\check{\eta}_{11}=\left(\begin{array}{ccccc}
  \eta_{11}  \\
   \vdots  \\
  \eta_{(k-1)1}
\end{array}\right)$, $\check{\eta}_{12}=\left(\begin{array}{ccccc}
  \eta_{12} &\cdots & \eta_{1\tilde{l}} \\
   \vdots  &\ddots & \vdots\\
  \eta_{(k-1)2} & \cdots & \eta_{(k-1)\tilde{l}}
\end{array}\right)$, $\check{\eta}_{22}=\left(\begin{array}{ccccc}
  \eta_{k2} &\cdots & \eta_{k\tilde{l}} \\
  \end{array}\right)$, $\check{\eta}_{32}=\left(\begin{array}{ccccc}
  \eta_{12} &\cdots & \eta_{1\tilde{l}} \\
   \vdots  &\ddots & \vdots\\
  \eta_{(k-1)2} & \cdots & \eta_{(k-1)\tilde{l}}
\end{array}\right)$.
Write:
\begin{equation}\label{314}
\begin{split}
 \left(\begin{array}{ccccc}
  D_{\tilde{Q}_{1}} &\cdots & \gamma_{1\tilde{l}} \\
   \vdots  &\ddots & \vdots\\
  0  & \cdots & D_{\tilde{Q}_{\tilde{l}}}
\end{array}\right)=\left(\begin{array}{ccccc}
  D_{\tilde{Q}_{1}} & & \check{\gamma}_{12} \\
     0  &  & \check{D}_{\tilde{Q}_{2\tilde{l}}}
\end{array}\right)
\end{split}
\end{equation}
and
\begin{equation}\label{315}
\begin{split}
\left(\begin{array}{ccccc}
  D_{Q_{1}} &\cdots & \beta_{1l} \\
   \vdots  &\ddots & \vdots\\
  0  & \cdots & D_{Q_{l}}
\end{array}\right)=\left(\begin{array}{ccccc}
  \check{D}_{Q_{1(k-1)}} &\check{\beta }_{12} & \check{\beta }_{13} \\
   0  &D_{Q_{k}} & \check{\beta }_{23}\\
  0  & 0 & \check{D}_{Q_{(k+1)l}}
\end{array}\right),
\end{split}
\end{equation}
where $\check{\gamma }_{12}=\left(\begin{array}{ccccc}
  \gamma_{12} &\cdots & \gamma_{1\tilde{l}} \\
   \end{array}\right)$, $\check{D}_{\tilde{Q}_{2\tilde{l}}}=\left(\begin{array}{ccccc}
  D_{\tilde{Q}_{2}} &\cdots & \gamma_{2\tilde{l}} \\
   \vdots  &\ddots & \vdots\\
  0  & \cdots & D_{\tilde{Q}_{\tilde{l}}}
\end{array}\right)$, $\check{D}_{Q_{1(k-1)}}=\left(\begin{array}{ccccc}
  D_{Q_{1}} &\cdots & \beta_{1(k-1)} \\
   \vdots  &\ddots & \vdots\\
  0  & \cdots & D_{Q_{k-1}}
\end{array}\right)$, $\check{\beta}_{12}=\left(\begin{array}{ccccc}
  \beta_{1k} \\
   \vdots  \\
  \beta_{(k-1)k}
\end{array}\right)$, $\check{\beta}_{13}=\left(\begin{array}{ccccc}
  \beta_{1(k+1)} &\cdots & \beta_{1l} \\
   \vdots  &\ddots & \vdots\\
  \beta_{(k-1)(k+1)}  & \cdots & \beta_{(k-1)l}
\end{array}\right)$, $\check{\beta}_{23}=\left(\begin{array}{ccccc}
  \beta_{k(k+1)} &\cdots & \beta_{kl}
\end{array}\right)$, $\check{D}_{Q_{(k+1)l}}=\left(\begin{array}{ccccc}
  D_{Q_{k+1}} &\cdots & \beta_{(k+1)l} \\
   \vdots  &\ddots & \vdots\\
  0  & \cdots & D_{Q_{l}}
\end{array}\right)$.
(\ref{313}) tells us that
\begin{equation}\label{JH4}
\eta_{k1}\circ D_{\tilde{Q}_{1}}=D_{Q_{k}}\circ \eta_{k1}.
\end{equation}
Since $(\tilde{Q}_{1}, D_{\tilde{Q}_{1}})$ and $(Q_{k}, D_{Q_{k}})$ are simple,  $\eta_{k1}:(\tilde{Q}_{1}, D_{\tilde{Q}_{1}}) \rightarrow (Q_{k}, D_{Q_{k}}) $ is an isomorphism. Denote
\begin{equation}
\begin{split}
A:=\left(\begin{array}{ccccc}
  \Id_{Q_{1}\oplus \cdots \oplus Q_{k-1}} &-\check{\eta }_{11}\circ (\eta_{k1})^{-1} & 0 \\
   0  &\Id_{Q_{k}} & 0\\
  0  & 0 & \Id_{Q_{k+1}\oplus \cdots \oplus Q_{l}}
\end{array}\right)
\end{split}
\end{equation}
and
\begin{equation}
\begin{split}
B:=\left(\begin{array}{ccccc}
  \Id_{\tilde{Q}_{1}} &-(\eta_{k1})^{-1} \circ \check{\eta }_{22}  \\
    0  & & \Id_{\tilde{Q}_{2}\oplus \cdots \oplus \tilde{Q}_{\tilde{l}}}
\end{array}\right).
\end{split}
\end{equation}
By direct calculation, we have:
\begin{equation}\label{JH2}
A\circ \eta \circ B=\left(\begin{array}{ccccc}
  0 & & \check{\eta}_{12}- \check{\eta }_{11}\circ (\eta_{k1})^{-1} \check{\eta }_{22}\\
   \eta_{k1}  & & 0\\
  0  &  & \check{\eta}_{32}
\end{array}\right).
\end{equation}
(\ref{313}) is equivalent to the following formula
\begin{equation}\label{3131}
\begin{split}
A\circ \eta \circ B \circ B^{-1} \circ \left(\begin{array}{ccccc}
  D_{\tilde{Q}_{1}} &\cdots & \gamma_{1\tilde{l}} \\
   \vdots  &\ddots & \vdots\\
  0  & \cdots & D_{\tilde{Q}_{\tilde{l}}}
\end{array}\right)=A\circ \left(\begin{array}{ccccc}
  D_{Q_{1}} &\cdots & \beta_{1l} \\
   \vdots  &\ddots & \vdots\\
  0  & \cdots & D_{Q_{l}}
\end{array}\right)A^{-1} \circ A \circ \eta \circ B,
\end{split}
\end{equation}
and then
\begin{equation}\label{JH3}
\begin{split}
& \left(\begin{array}{ccccc}
  0 & & (\check{\eta}_{12}- \check{\eta }_{11}\circ (\eta_{k1})^{-1} \check{\eta }_{22})\circ \check{D}_{\tilde{Q}_{2\tilde{l}}}\\
   \eta_{k1}\circ D_{\tilde{Q}_{1}}  & & \check{\eta }_{22} \circ \check{D}_{\tilde{Q}_{2\tilde{l}}} -D_{Q_{k}}\circ \check{\eta }_{22} +\eta_{k1} \circ \check{\gamma }_{12} \\
  0  &  & \check{\eta}_{32}\circ \check{D}_{\tilde{Q}_{2\tilde{l}}}
\end{array}\right)\\=&
\left(\begin{array}{ccccc}
 \check{D}_{Q_{1(k-1)}}\circ  \check{\eta}_{11}- \check{\eta }_{11}\circ D_{\tilde{Q_{1}}}+ \check{\beta }_{12}\circ \eta_{k1} & & \check{D}_{Q_{1(k-1)}}\circ (\check{\eta}_{12}- \check{\eta }_{11}\circ (\eta_{k1})^{-1} \check{\eta }_{22})+ \check{\alpha }\circ \check{\eta}_{32}\\
 D_{Q_{k}}\circ  \eta_{k1}  & & \check{\beta }_{23} \circ  \check{\eta }_{32} \\
  0  &  & \check{D}_{Q_{(k+1)l}}\circ \check{\eta}_{32}
\end{array}\right),\\
\end{split}
\end{equation}
where $\check{\alpha }=\check{\beta}_{13}- \check{\eta }_{11}\circ (\eta_{k1})^{-1} \check{\beta }_{22}$. Set
\begin{equation}
\check{\eta}=\left(\begin{array}{ccccc}
   \check{\eta}_{12}- \check{\eta }_{11}\circ (\eta_{k1})^{-1} \check{\eta }_{22} \\
   \check{\eta}_{32}
\end{array}\right).
\end{equation}
From (\ref{JH2}) and (\ref{JH3}), it is not hard to see that $\check{\eta }:\tilde{Q}_{2}\oplus \cdots \oplus \tilde{Q}_{\tilde{l}}\rightarrow Q_{1}\oplus \cdots \oplus \tilde{Q}_{k-1}\oplus Q_{k+1}\oplus \cdots \oplus \tilde{Q}_{l}$ is a bundle isomorphism and satisfies
\begin{equation}\label{JH5}
\check{\eta} \circ \check{D}_{\tilde{Q}_{2\tilde{l}}} =\left(\begin{array}{ccccc}
 \check{D}_{Q_{1(k-1)}}& & \check{\alpha}\\
   0  &  & \check{D}_{Q_{(k+1)l}}
\end{array}\right)\circ \check{\eta} .
\end{equation}
According to (\ref{JH4}), (\ref{JH5}) and induction, we can prove there must exist an isomorphism between $\oplus_{i=1}^{l}(Q_{i}, D_{Q_{i}})$ and $\oplus_{\alpha =1}^{\tilde{l}}(\tilde{Q}_{\alpha }, D_{\tilde{Q}_{\alpha }})$.
\end{proof}

\begin{prop}\label{t1}
Let $(\hat{E}, \hat{D})$ be a flat complex vector bundle over a compact Riemannian manifold $(M, g)$,  $\hat{K}$ be a Hermitian metric on $\hat{E}$ and  $i_{0}:\hat{S}\hookrightarrow \hat{E}$ be a $\hat{D}$-invariant sub-bundle of $\hat{E}$. Suppose that there is a sequence of gauge transformation $\hat{\sigma}_{l}$ such that $\hat{D}_{l}:=\hat{\sigma}_{l}(\hat{D})\rightarrow \hat{D}_{\infty}$ weakly in $L_{1}^{p}$-topology as $l\rightarrow +\infty$. Furthermore, $\|\hat{D}_{l, K}^{\ast }\psi_{\hat{D}_{l}, K}\|_{L^{\infty}}$ and $\|\psi_{\hat{D}_{l}, K}\|_{L^{\infty}}$ are uniformly bounded. Then there is a subsequence of $\eta_{l}:=\hat{\sigma}_{l}\circ i_{0}$, up to rescale, converges weakly to a nonzero map $\eta_{\infty}:\hat{S}\rightarrow \hat{E}$ satisfying $\eta_{\infty}\circ D_{\hat{S}}=\hat{D}_{\infty}\circ \eta _{\infty}$ in $L_{2}^{p}$-topology, where $D_{\hat{S}}=\hat{D}|_{\hat{S}}$ is the induced flat connection on $\hat{S}$.
\end{prop}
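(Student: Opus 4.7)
The plan is to view $\eta_l = \hat{\sigma}_l \circ i_0$ as a section of $\mathcal{F} := \Hom(\hat{S}, \hat{E})$, rescale so that $\|\eta_l\|_{L^\infty}=1$, use the $\hat{D}$-invariance of $\hat{S}$ to obtain a uniformly elliptic equation with coefficients controlled by the hypotheses, and extract a weak $L_2^p$ limit that is automatically nonzero. First I observe that since $\hat{D}_l = \hat{\sigma}_l \circ \hat{D} \circ \hat{\sigma}_l^{-1}$ and $\hat{D}\circ i_0 = i_0\circ D_{\hat{S}}$, the map $\eta_l$ intertwines the flat connections, $\hat{D}_l \circ \eta_l = \eta_l \circ D_{\hat{S}}$. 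Putting $K_S := i_0^\ast \hat{K}$ and decomposing $\hat{D}_l = \hat{D}_{l,K} + \psi_l$, $D_{\hat{S}} = D_{\hat{S},K_S} + \psi_{\hat{S}}$ with $\psi_l := \psi_{\hat{D}_l, K}$, and denoting by $\mathcal{D}_{l,K}$ the unitary connection on $\mathcal{F}$ induced by $\hat{D}_{l,K}$ and $D_{\hat{S},K_S}$, the intertwining identity is equivalent to the first-order equation
\begin{equation*}
\mathcal{D}_{l,K} \eta_l = -\bigl(\psi_l \circ \eta_l - \eta_l \circ \psi_{\hat{S}}\bigr).
\end{equation*}
After replacing $\eta_l$ by $\tilde{\eta}_l := \eta_l / \|\eta_l\|_{L^\infty}$, the uniform $L^\infty$ bound on $\psi_l$ gives $\|\mathcal{D}_{l,K}\tilde{\eta}_l\|_{L^\infty} \leq C$.

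Applying $\mathcal{D}_{l,K}^\ast$ to this first-order equation and expanding via the Leibniz rule in local normal coordinates produces the second-order elliptic equation
\begin{equation*}
\mathcal{D}_{l,K}^\ast \mathcal{D}_{l,K} \tilde{\eta}_l = (\hat{D}_{l,K}^\ast \psi_l)\circ \tilde{\eta}_l + g^{ij}\psi_{l,j}\circ \mathcal{D}_{l,K,i}\tilde{\eta}_l + (\text{fixed terms in }\psi_{\hat{S}}),
\end{equation*}
whose right-hand side is uniformly bounded in $L^\infty$ by the hypotheses on $\|\psi_l\|_{L^\infty}$ and $\|\hat{D}_{l,K}^\ast \psi_l\|_{L^\infty}$ together with the $L^\infty$ bound on $\mathcal{D}_{l,K}\tilde{\eta}_l$ already in hand. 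Working in local trivializations and writing $\mathcal{D}_{l,K} = d + A_l$, the assumption $\hat{D}_l \rightharpoonup \hat{D}_\infty$ in $L_1^p$ with $p > \dim M$ gives a uniform $L^\infty$ bound on $A_l$ by Sobolev embedding, so the equation reduces to a scalar Laplacian with bounded coefficients and $L^\infty$ inhomogeneous term, and standard interior $W^{2,p}$ regularity yields $\|\tilde{\eta}_l\|_{L_2^p} \leq C$ uniformly in $l$.

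By Rellich--Kondrachov a subsequence of $\tilde{\eta}_l$ converges weakly in $L_2^p$ and strongly in $C^0$ to some $\eta_\infty$. The strong $C^0$ convergence forces $\|\eta_\infty\|_{L^\infty} = \lim_l\|\tilde{\eta}_l\|_{L^\infty} = 1$, so $\eta_\infty \not\equiv 0$; passing to the limit in the intertwining relation, where $\hat{D}_l \rightharpoonup \hat{D}_\infty$ weakly in $L_1^p$ while $\tilde{\eta}_l \to \eta_\infty$ strongly in $L_1^p$, yields $\hat{D}_\infty \circ \eta_\infty = \eta_\infty \circ D_{\hat{S}}$ as required.

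The hard part is the elliptic step above: the second-order operator $\mathcal{D}_{l,K}^\ast \mathcal{D}_{l,K}$ has connection coefficients depending on $l$ and only controlled in $L_1^p$, so one must either argue in local trivializations (as sketched) to recast the equation as a scalar Laplacian with $L^\infty$ perturbations, or carry out the estimates intrinsically and track how the $L^p$ norms of the varying connection propagate. Once the uniform $L_2^p$ bound is in place, everything else is a soft functional-analytic extraction of limits.
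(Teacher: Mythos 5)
Your argument is correct and follows essentially the same route as the paper: both exploit the intertwining relation $\hat{D}_{l}\circ\eta_{l}=\eta_{l}\circ D_{\hat{S}}$ to turn the problem into a uniformly controlled elliptic equation for a normalized $\tilde{\eta}_{l}$, then extract a weak $L_{2}^{p}$ limit that is nonzero by the normalization. The only (cosmetic) difference is that the paper normalizes in $L^{2}$ and obtains the uniform $C^{0}$ bound via Moser iteration applied to the inequality $\Delta|\eta_{l}|_{K}^{2}\geq-\hat{C}_{0}(\cdots)|\eta_{l}|_{K}^{2}$, with the zeroth-order form of the Laplacian obtained by substituting the first-order relation back in, whereas you normalize in $L^{\infty}$ and read the gradient bound directly off the first-order equation.
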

\begin{proof}
 With respect to the Hermitian metric $K$, we have the following decomposition
\begin{equation}
\hat{D}_{l}=\hat{D}_{l, K}+\psi_{\hat{D}_{l}, K},
\end{equation}
where $\hat{D}_{l, K}$ is a $K$-unitary connection, $\psi_{\hat{D}_{l}, K}\in \Omega^{1}(\mbox{End}(E))$ is $K$-self-adjoint. Under the condition  $\hat{D}_{l}\rightarrow \hat{D}_{\infty}$ weakly in $L_{1}^{p}$-topology, we know that
\begin{equation}
\hat{D}_{l, K}\rightarrow \hat{D}_{\infty , K}, \quad and \quad \psi_{\hat{D}_{l}, K}\rightarrow \psi_{\hat{D}_{\infty}, K}
\end{equation}
weakly in $L_{1}^{p}$-topology as $l\rightarrow \infty $. Choose local  coordinates $\{x^{i}\}_{i=1}^{n}$ on $M$, and write $g=g_{ij}dx^{i}\otimes dx^{j}$. Note that $\hat{S}$ is a $\hat{D}$-invariant sub-bundle, and we have
\begin{equation}
\eta_{l}\circ D_{\hat{S}} =\hat{D}_{l} \circ \eta_{l},
\end{equation}
\begin{equation}\label{E01}
\begin{split}
\Delta_{K, l} \eta_{l}&=g^{ij}(\nabla^{K, l}_{\frac{\partial }{\partial x^{i}}}(\hat{D}_{l, K}\circ \eta_{l}-\eta_{l}\circ D_{\hat{S}, K}))(\frac{\partial }{\partial x^{j}})\\
&=\hat{D}_{l, K}^{\ast }\psi_{\hat{D}_{l}, K} \circ \eta_{l}-\eta_{l}\circ D_{\hat{S}, K}^{\ast }\psi_{D_{\hat{S}}, K} -2 g^{ij} \psi_{\hat{D}_{l}, K}(\frac{\partial }{\partial x^{i}}) \circ  \eta_{l}\circ \psi_{D_{\hat{S}}, K}(\frac{\partial }{\partial x^{j}})\\
&+g^{ij} \psi_{\hat{D}_{l}, K}(\frac{\partial }{\partial x^{i}}) \circ \psi_{\hat{D}_{l}, K}(\frac{\partial }{\partial x^{j}}) \circ  \eta_{l}+ g^{ij}\eta_{l}\circ \psi_{D_{\hat{S}}, K}(\frac{\partial }{\partial x^{i}})\circ \psi_{D_{\hat{S}}, K}(\frac{\partial }{\partial x^{j}})
\end{split}
\end{equation}
and
\begin{equation}\label{EC0}
\Delta |\eta_{l}|_{K}^{2}\geq -\hat{C}_{0}(|\hat{D}_{l, K}^{\ast }\psi_{\hat{D}_{l}, K}|_{K}+|D_{\hat{S}, K}^{\ast }\psi_{D_{\hat{S}}, K}|_{K}+|\psi_{\hat{D}_{l}, K}|_{K}^{2}+|\psi_{D_{\hat{S}}, K}|_{K}^{2})|\eta_{l}|_{K}^{2},
\end{equation}
where $\hat{C}_{0}$ is a constant depending only on the dimension of $M$. Set $\tilde{\eta}_{l}=\frac{\eta_{l}}{\|\eta_{l}\|_{L^{2}}}$. From (\ref{EC0}) and the Moser's iteration, we see that there exists a uniform constant $\hat{C}_{1}$ such that
\begin{equation}\label{EC01}
\|\tilde{\eta}_{l}\|_{L^{\infty}}\leq \hat{C}_{1}
\end{equation}
for all $l$.
By the above uniform $C^{0}$-estimate (\ref{EC01}), the equation (\ref{E01}) and  the assumption that  $\hat{D}_{l}\rightarrow \hat{D}_{\infty}$ weakly in $L_{1}^{p}$-topology, the elliptic theory gives us that there exists a subsequence of $\tilde{\eta}_{l}$  which converges weakly in $L_{2}^{p}$-topology to a  map $\eta_{\infty}$ such that $\eta_{\infty}\circ D_{\hat{S}}=\hat{D}_{\infty}\circ \eta _{\infty}$. On the other hand, the fact that $\|\tilde{\eta}_{l}\|_{L^{2}}=1$ for all $l$ implies that the map $\eta_{\infty}$ is non-zero.
\end{proof}

\begin{theorem}\label{a1}
Let $(E, D)$ be a rank $r$ flat complex vector bundle over a compact Riemannian manifold $(M, g)$,  $K$ be a Hermitian metric on $E$. Suppose that there is a sequence of gauge transformation $\sigma_{j}$ such that $D_{j}:=\sigma_{j}(D)\rightarrow D_{\infty}$ weakly in $L_{1}^{p}$-topology  with $D_{\infty, K}^{\ast }\psi_{D_{\infty }, K}=0$. Furthermore, $\|D_{j, K}^{\ast }\psi_{D_{j}, K}\|_{L^{\infty}}$ and $\|\psi_{D_{j}, K}\|_{L^{\infty}}$ are uniformly bounded. Then, we have:
 \begin{equation}\label{I0}
(E, D_{\infty })\cong Gr^{JH}(E, D),
\end{equation}
where $ Gr^{JH}(E, D)$ is the graded  flat complex vector bundle associated to the Jordan-H\"older filtration of $(E, D)$.
\end{theorem}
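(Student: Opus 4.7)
The strategy is to combine the semi-simplicity criterion of Lemma \ref{lem:x1} with the map-extraction result of Proposition \ref{t1}, within an induction on the length of the Jordan-H\"older filtration of $(E, D)$.

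Since the hypothesis gives $D_{\infty,K}^{\ast}\psi_{D_{\infty},K}=0$, Lemma \ref{lem:x1} applied to $(E, D_{\infty})$ yields that $(E, D_{\infty})$ is semi-simple and hence decomposes as a direct sum of simple flat sub-bundles. I will repeatedly use the standard fact (equivalent to semi-simplicity, via the Riemann--Hilbert correspondence with $\pi_{1}(M)$-representations) that any $D_{\infty}$-invariant sub-bundle of $(E, D_{\infty})$ admits a $D_{\infty}$-invariant complement. I also recall that any flat bundle map between flat bundles has, in local flat frames, constant-matrix representation, so its kernel and image are automatically flat sub-bundles of constant rank.

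I then induct on the length $l$ of the Jordan-H\"older filtration of $(E, D)$. For $l=1$, $(E, D)$ is simple; Proposition \ref{t1} with $\hat{S}=E$ produces a non-zero flat bundle map $\eta_{\infty}:(E, D)\to(E, D_{\infty})$. Simplicity of $(E, D)$ forces $\ker\eta_{\infty}=0$, and equality of ranks then forces $\eta_{\infty}$ to be an isomorphism, giving $(E, D_{\infty})\cong(E, D)=Gr^{JH}(E, D)$. For the inductive step, let $E_{1}\cong Q_{1}$ be the first (simple) step of a Jordan-H\"older filtration of $(E, D)$; applying Proposition \ref{t1} with $\hat{S}=E_{1}$ yields a non-zero flat map $\eta_{\infty}^{(1)}:(Q_{1}, D_{Q_{1}})\to(E, D_{\infty})$. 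Injectivity follows from simplicity of $Q_{1}$, and the image $V^{(1)}$ is a $D_{\infty}$-invariant simple sub-bundle isomorphic to $(Q_{1}, D_{Q_{1}})$. The splitting property of $(E, D_{\infty})$ then gives $(E, D_{\infty})\cong V^{(1)}\oplus W$ as flat bundles, where $W$ is semi-simple of rank $r-\rank(Q_{1})$.

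The principal obstacle is to reduce to the quotient flat bundle $(E/E_{1}, D')$, whose Jordan-H\"older filtration has length $l-1$ with graded bundle $\bigoplus_{i\geq 2}(Q_{i}, D_{Q_{i}})$. To apply the induction hypothesis I would construct a sequence of gauge transformations $\sigma_{j}'$ on $E/E_{1}$ with $\sigma_{j}'(D')\to D_{\infty}'$, satisfying the hypotheses of the theorem and with $(E/E_{1}, D_{\infty}')\cong W$. This is not immediate because $\sigma_{j}$ need not preserve $E_{1}$. One natural approach is to fix a Hermitian metric $K$ giving a smooth orthogonal splitting $E=E_{1}\oplus F$ with $F=E_{1}^{\perp}$, write each $\sigma_{j}$ in block form with respect to this splitting, and extract a limiting connection on $F\cong E/E_{1}$ after a rescaling that neutralizes the blow-up of $\sigma_{j}$ along the $E_{1}$ direction (matched to the rescaling used in Proposition \ref{t1}). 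Verifying that the uniform $L^{\infty}$-bounds on $\psi_{D_{j},K}$ and on $D_{j,K}^{\ast}\psi_{D_{j},K}$ descend to corresponding bounds on the induced sequence on the quotient, and that the limiting quotient connection is flat and satisfies the divergence-free condition, is the genuine technical work. Granting this reduction, the induction hypothesis yields $W\cong\bigoplus_{i\geq 2}(Q_{i}, D_{Q_{i}})$, whence
\[
(E, D_{\infty})\cong V^{(1)}\oplus W\cong\bigoplus_{i=1}^{l}(Q_{i}, D_{Q_{i}})=Gr^{JH}(E, D),
\]
as required.
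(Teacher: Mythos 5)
Your overall architecture (handle the first simple piece of the Jordan--H\"older filtration via Proposition \ref{t1}, then induct by passing to the quotient) matches the paper's, and your base case is fine. But the proposal has a genuine gap, and you have flagged it yourself: the entire reduction to the quotient bundle is stated as ``granting this reduction.'' That reduction is not a routine verification --- it is the bulk of the paper's proof of Theorem \ref{a1}, and your sketched route (block-decomposing $\sigma_{j}$ relative to a fixed orthogonal splitting and ``rescaling to neutralize the blow-up along $E_{1}$'') does not obviously close it: nothing in that sketch explains why the off-diagonal interaction between $E_{1}$ and its complement dies in the limit, which is exactly what is needed for the induced quotient sequence to satisfy the hypotheses of the theorem and for its limit to be the complement $W$ of $V^{(1)}$ inside $(E,D_{\infty})$. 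A secondary point: you invoke Lemma \ref{lem:x1} plus an abstract splitting property of semi-simple flat bundles to produce $W$, whereas the splitting must ultimately be identified with the limit of a concrete gauge-transformed sequence on $E/E_{1}$; asserting $(E/E_{1},D_{\infty}')\cong W$ is itself part of what must be proved.

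For comparison, the paper fills this gap as follows. It considers the $H_{j}$-orthogonal projections onto $E_{1}$ (where $H_{j}=K\sigma_{j}^{\ast}\sigma_{j}$), conjugated to $K$-orthogonal projections $\pi_{1}^{j}=\sigma_{j}\circ\pi_{1}^{H_{j}}\circ\sigma_{j}^{-1}$, and proves the key energy estimate \eqref{pi4}, namely $\int_{M}|D_{j}\pi_{1}^{j}|_{K}^{2}\,dV_{g}\rightarrow 0$; this rests on the second-fundamental-form identity \eqref{codazzi1} together with \eqref{C120}, i.e.\ on the block structure of $D_{K}^{\ast}\psi_{D,K}$ relative to an invariant sub-bundle. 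This yields a limiting \emph{flat} projection $\pi_{1}^{\infty}$ (so $D_{\infty}\pi_{1}^{\infty}=0$), which produces the direct-sum decomposition $(E,D_{\infty})\cong(E_{1}^{\infty},D_{1,\infty})\oplus(Q_{\infty},D_{Q_{\infty}})$ without appealing to Lemma \ref{lem:x1}. The paper then uses Lemma 5.12 of \cite{Da} to find $K$-unitary gauges $u_{j}\rightarrow\Id_{E}$ with $\pi_{1}^{j}=u_{j}\circ\pi_{1}^{\infty}\circ u_{j}^{-1}$, and transports $\sigma_{j}$ through $u_{j}$, the complementary projection, and $P^{\ast K}$ to obtain an explicit sequence $\hat{\sigma}_{j}$ of gauge transformations of $(Q,D_{Q})$ whose uniform bounds on $\psi$ and $D_{K}^{\ast}\psi$ follow again from \eqref{Q9}, \eqref{codazzi1} and \eqref{pi4}, and whose limit is precisely $D_{Q_{\infty}}$. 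Until you supply an argument at this level of detail --- in particular a substitute for the estimate \eqref{pi4} --- the inductive step, and hence the proof, is incomplete.
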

\begin{proof}
We  prove this by  induction. Let's assume that the conclusion of this theorem is  true for $\rank(E)<r$. If $(E, D)$ is simple, Proposition \ref{t1}
implies that there exists an isomorphic map between $(E, D)$ and $(E, D_{\infty })$. Suppose $(E, D)$ is not simple, and then we have the following Jordan-H\"older filtration of sub-bundles
\begin{equation}\label{HNS010}
0=E_{0}\subset E_{1}\subset \cdots \subset E_{i} \cdots \subset E_{l}=E ,
\end{equation}
such that every sub-bundle $E_{i}$ is $D$-invariant and every quotient bundle $(Q_{i}, D_{i}):=(E_{i}/E_{i-1}, D_{i})$ is flat and simple. Let $S=E_{1}$ and $Q=E/E_{1}$. We consider the following exact sequence
\begin{equation}
0\rightarrow S\xrightarrow{i_{0}} E \xrightarrow{P} Q \rightarrow 0.
\end{equation}
Denote that $D_{S}$ and $D_{Q}$ are the induced connections on $S$ and $Q$, $H_{j}=K\sigma_{j}^{\ast}\circ \sigma_{j}$, $\pi_{1}^{H_{j}}$ is the orthogonal projection onto $E_{1}$ with respect to the metric $H_{j}$. Set $\pi_{1}^{j}=\sigma_{j}\circ \pi_{1}^{H_{j}} \circ \sigma_{j}^{-1}$.
It is easy to see that \begin{equation}(\pi_{1}^{j})^{\ast K}=\pi_{1}^{j}=(\pi_{1}^{j})^{2}\end{equation}
and
\begin{equation}
(\Id_{E}-\pi_{1}^{j})\circ D_{j}\pi_{1}^{j}=0.
\end{equation}
According to (\ref{pi2}), (\ref{pi3}), (\ref{codazzi1}), (\ref{C120}) and the conditions of the theorem, we derive
\begin{equation}\label{pi4}
\begin{split}
\int_{M} |D_{j}\pi_{1}^{j}|_{K}^{2}dV_{g}&=\int_{M}|\pi_{1}^{j}\circ D_{j}\pi_{1}^{j}|_{K}^{2}dV_{g}=\int_{M}|\pi_{1}^{H_{j}}\circ D\pi_{1}^{H_{j}}|_{H_{j}}^{2}dV_{g}\\
&=2\int_{M}\langle D_{S, H_{j}}^{\ast}\psi_{D_{S}, H_{j}}-\pi_{1}^{H_{j}}\circ D_{H_{j}}^{\ast }\psi_{D, H_{j}}\circ i_{0}, \Id_{S}\rangle_{H_{j}}dV_{g}\\&=-2\int_{M}\langle\pi_{1}^{H_{j}}\circ D_{H_{j}}^{\ast }\psi_{D, H_{j}}\circ i_{0}, \Id_{S}\rangle_{H_{j}}dV_{g}\\
&\leq 2\int_{M}|D_{H_{j}}^{\ast }\psi_{D, H_{j}}|_{H_{j}}dV_{g}\\
&=2\int_{M}|D_{j}^{\ast }\psi_{D_{j}, K}|_{K}dV_{g}\rightarrow 0.
\end{split}
\end{equation}
On the other hand, $|\pi_{1}^{j}|_{K}\equiv \rank (S)$. After going to a subsequence, one can obtain $\pi_{1}^{j} \rightarrow \pi_{1}^{\infty }$ strongly in $L^{p}\cap L_{1}^{2}$ , and \begin{equation}D_{\infty}\pi_{1}^{\infty}=0.\end{equation}  We know that $\pi_{1}^{\infty}$ determines a $D_{\infty}$-invariant sub-bundle $E_{1}^{\infty}$ of $(E, D_{\infty})$ with $\rank (E_{1}^{\infty})=\rank (E_{1})$, and
\begin{equation}
(E, D_{\infty})\cong (E_{1}^{\infty}, D_{1, \infty})\oplus (Q_{\infty}, D_{Q_{\infty}}),
\end{equation}
where $Q_{\infty}=(E_{1}^{\infty})^{\bot K}$, $D_{1, \infty}$ and $D_{Q_{\infty}}$ are the induced connections on $E_{1}^{\infty}$ and $Q_{\infty}$ by the connection $D_{\infty}$.

Proposition \ref{t1} yields that there is a subsequence of $\eta_{j}:=\frac{\sigma_{j}\circ i_{0}}{\|\sigma_{j}\circ i_{0}\|_{L^{2}}}$, up to rescale, converges to a nonzero map $\eta_{\infty}:S\rightarrow E$ satisfying $\eta_{\infty}\circ D_{S}=D_{\infty}\circ \eta _{\infty}$.
Due to $\pi_{1}^{j}\circ \sigma_{j}\circ i_{0}=\sigma_{j}\circ i_{0}$,  we have: \begin{equation}\pi_{1}^{\infty }\circ \eta_{\infty }=\eta_{\infty }.\end{equation}
The condition $D_{\infty, K}^{\ast }\psi_{D_{\infty }, K}=0$ implies that $D_{\infty}$ is smooth, and then $\eta _{\infty}$ is also smooth. Because $E_{1}$ is simple, it is easy to see that $\eta_{\infty}$ is an isomorphic map between $(E_{1}, D_{S})$ and $(E_{1}^{\infty}, D_{1, \infty })$.

Let $\{e_{\alpha }\}$ be a local frame of $E_{1}$, and $H_{j, \alpha \beta}=\langle \eta_{j}(e_{\alpha }), \eta_{j}(e_{\beta})\rangle_{K}$. We write
\begin{equation}
\pi_{1}^{j} (Y)=\langle Y, \eta_{j}(e_{\beta })\rangle_{K}H_{j}^{\alpha \beta }\eta_{j}(e_{\alpha})
\end{equation}
for any $Y \in \Gamma(E)$, where $(H_{j}^{\alpha \beta })$ is the inverse of the matrix $(H_{j, \alpha \beta})$.
Since $\eta_{j}\rightarrow
\eta_{\infty}$ weakly in $L_{2}^{p}$-topology, and $\eta_{\infty}$ is injective, we know  that $\pi_{1}^{j}\rightarrow
\pi_{1}^{\infty}$ weakly in $L_{2}^{p}$-topology . Here, $\pi_{1}^{\infty}: E \rightarrow E$ is just  the projection onto $E_{1}^{\infty}$ with respect to the metric $K$.

Using
 Lemma 5.12 in \cite{Da}, we can choose a sequence of $K$-unitary gauge
transformations $u_{j}$ such that
$\pi_{1}^{j}=u_{j}\circ \pi_{1}^{\infty}\circ u_{j}^{-1}$  and $u_{j}\rightarrow \Id_{E}$
weakly in $L_{2}^{p}$-topology as $j\rightarrow \infty$.
It is straightforward to check that $u_{j}(Q_{\infty})=u_{j}((E_{1}^{\infty})^{\bot K})=(\pi_{1}^{j} (E))^{\bot K}$, and the $K$-unitary gauge
transformation $u_{0}$ satisfies $u_{0}((E_{1}^{\infty})^{\bot K})=E_{1}^{\bot K}$.
Set
\begin{equation}
D_{j}^{Q}=(P^{\ast K})^{-1}\circ u_{0} \circ (\pi_{1}^{\infty})^{\bot K} \circ u_{j}^{-1} \circ D_{j} \circ u_{j} \circ (\pi_{1}^{\infty})^{\bot K} \circ u_{0}^{-1}\circ P^{\ast K} ,
\end{equation}
\begin{equation}
\hat{\sigma }_{j}=(P^{\ast K})^{-1}\circ u_{0} \circ (\pi_{1}^{\infty})^{\bot K} \circ u_{j}^{-1} \circ \sigma_{j} \circ P^{\ast K}
\end{equation}
and
\begin{equation}
\hat{\sigma }_{j}^{-1}=(P^{\ast K})^{-1} \circ (\pi_{1}^{\infty})^{\bot K} \circ \sigma_{j}^{-1} \circ u_{j} \circ u_{0} \circ P^{\ast K}.
\end{equation}
One can find that
\begin{equation}
D_{j}^{Q}=\hat{\sigma }_{j}\circ D_{Q} \circ \hat{\sigma }_{j}^{-1}
\end{equation}
and
\begin{equation}
D_{j}^{Q}\rightarrow D_{\infty}^{Q}=(P^{\ast K})^{-1}\circ u_{0} \circ D_{Q_{\infty}}\circ u_{0}^{-1}\circ P^{\ast K}\end{equation}
weakly in $L_{1}^{p}$-topology.
 From (\ref{Q9}), $(\ref{codazzi1})$ and (\ref{pi4}), it follows that $\|\psi_{D^{Q}_{j}, K}\|_{L^{\infty}}$ and $\|(D^{Q}_{j, K})^{\ast }\psi_{D^{Q}_{j}, K}\|_{L^{\infty}}$  are uniformly bounded, and $D_{Q_{\infty}, K}^{\ast }\psi_{D_{Q_{\infty}}, K}=0$.
 According to the induction hypothesis, we have
  \begin{equation}\label{I0}
(Q_{\infty}, D_{Q_{\infty} })\cong (Q, D_{\infty}^{Q})\cong Gr^{JH}(Q, D_{Q}).
\end{equation}
 This completes the proof of theorem.

\end{proof}

\begin{proof}[Proof of Theorem \ref{thm0}]
Thanks to Proposition \ref{b12} (\cite{Cor}), we know that the harmonic flow (\ref{H1}) has a long time solution $\sigma (t)$ for $t\in [0, \infty )$, and there exists a sequence $t_{j}\rightarrow \infty $  such that $\sigma(t_{j}) \{D\}$ converges weakly,  modulo $K$-unitary gauge transformations, to a flat connection $D_{\infty}$ in $L_{1}^{p}$-topology,  and $D_{\infty ,K}^{\ast}\psi_{\infty , K}=0$. (\ref{C15}) and (\ref{C13}) imply that $\|\psi_{\sigma (t) \{D\}, K}\|_{L^{\infty }}$ and $\|(\sigma(t) \{D\})_{K}^{\ast }\psi_{\sigma (t) \{D\}, K}\|_{L^{\infty}}$ are uniformly bounded.  By  Theorem \ref{a1}, we deduce
\begin{equation}
(E, D_{\infty })\cong Gr^{JH}(E, D).
\end{equation}
\end{proof}

\end{document}